\begin{document}

\numberwithin{equation}{section}

\def\comment#1#2{\textcolor{blue}{(#1: #2)}}
\def\red#1{\textcolor{red}{#1}}

\theoremstyle{plain}
\newtheorem{theorem}{Theorem}[section]
\newtheorem{conjecture}[theorem]{Conjecture}
\newtheorem{corollary}[theorem]{Corollary}
\newtheorem{definition}[theorem]{Definition}
\newtheorem{lemma}[theorem]{Lemma}
\newtheorem{proposition}[theorem]{Proposition}
\newtheorem{problem}[theorem]{Problem}

\theoremstyle{definition}
\newtheorem{example}[theorem]{Example}
\newtheorem{remark}[theorem]{Remark}
\newtheorem{algorithm}[theorem]{Algorithm}
\newtheorem{construction}[theorem]{Construction}

\def\ld{\backslash}
\def\ldd{\backslash^\cdot}
\def\rdd{/^\cdot}
\def\im#1{\mathrm{Im}#1}
\def\ker#1{\mathrm{Ker}#1}
\def\aut#1{\mathrm{Aut}#1}
\def\aff#1{\mathrm{Aff}#1}
\def\img#1{\mathrm{Im}#1}
\def\lmlt#1{\mathrm{LMlt}#1}
\def\rmlt#1{\mathrm{RMlt}#1}
\def\mlt#1{\mathrm{Mlt}#1}
\def\inn#1{\mathrm{Inn}#1}
\def\dis#1{\mathrm{Dis}#1}
\def\Z{\mathbb Z}
\def\F{\mathbb F}
\def\Q{\mathcal Q}
\def\CA{\mathrm{Aut}_C}
\def\ort#1{\mathrm{Ort}(#1)}
\def\CO#1{J\mathrm{Aut}_C(#1)\cap\ort{#1}}

\title{Distributive and trimedial quasigroups of order 243}

\author{P\v remysl Jedli\v cka}
\author{David Stanovsk\'y}
\author{Petr Vojt\v echovsk\'y}

\address[Jedli\v cka]{Department of Mathematics, Faculty of Technology, Czech University of Life Sciences, Prague, Czech Republic}
\address[Stanovsk\'y]{Department of Algebra, Faculty of Mathematics and Physics, Charles University, Prague, Czech Republic}
\address[Stanovsk\'y]{Department of Electrotechnics and Computer Science, Kazakh-British Technical University, Almaty, Kazakhstan}
\address[Vojt\v{e}chovsk\'y]{Department of Mathematics, University of Denver, 2280 S Vine St, Denver, Colorado 80208, U.S.A.}
\email[Jedli\v cka]{jedlickap@tf.czu.cz}
\email[Stanovsk\'y]{stanovsk@karlin.mff.cuni.cz}
\email[Vojt\v{e}chovsk\'y]{petr@math.du.edu}

\thanks{Research partially supported by the GA\v CR grant 13-01832S (Stanovsk\'y), the Simons Foundation Collaboration Grant 210176 (Vojt\v{e}chovsk\'y), and the University of Denver PROF grant (Vojt\v{e}chovsk\'y).}

\keywords{Distributive quasigroup, trimedial quasigroup, medial quasigroup, entropic quasigroup, commutative Moufang loop, latin square, Mendelsohn triple system, classification, enumeration.}

\subjclass[2000]{Primary: 20N05. Secondary: 05B15, 05B07.}

\begin{abstract}
We enumerate three classes of non-medial quasigroups of order $243=3^5$ up to isomorphism. There are $17004$ non-medial trimedial quasigroups of order $243$ (extending the work of Kepka, B\'en\'eteau and Lacaze), $92$ non-medial distributive quasigroups of order $243$ (extending the work of Kepka and N\v{e}mec), and $6$ non-medial distributive Mendelsohn quasigroups of order $243$ (extending the work of Donovan, Griggs, McCourt, Opr\v sal and Stanovsk\'y).

The enumeration technique is based on affine representations over commutative Moufang loops, on properties of automorphism groups of commutative Moufang loops, and on computer calculations with the \texttt{LOOPS} package in \texttt{GAP}.
\end{abstract}

\maketitle

\section{Introduction}\label{sec:intro}

Enumeration of quasigroups (equivalently, latin squares) is one of the classical topics of combinatorics. Enumerating all quasigroups of a given order $n$ is a difficult problem already for small values of $n$. Indeed, the number of latin squares is known only up to $n=11$ \cite{McKWan}, and the number of quasigroups up to isomorphism is known only up to $n=10$ \cite{McKMeyMyr}. Consequently, many quasigroup enumeration projects deal with particular well-studied classes or varieties.

In this paper we focus on quasigroups that admit an affine representation over nonassociative commutative Moufang loops. We enumerate non-medial trimedial quasigroups of order $243=3^5$ up to isomorphism. In particular, we enumerate non-medial distributive quasigroups and non-medial distributive Mendelsohn quasigroups of order $243$, the latter algebraic structures being in one-to-one correspondence with non-affine distributive Mendelsohn triple systems of order $243$.

The enumeration of quasigroups affine over nonassociative commutative Moufang loops is interesting only for orders that are powers of $3$ (see below). The previous step, $n=81=3^4$, has been completed in $1981$ by Kepka and N\v{e}mec for distributive quasigroups \cite{KN}, and in $1987$ by Kepka, B\'en\'eteau and Lacaze for trimedial quasigroups \cite{KBL}.
Our calculations independently verify their enumeration results.

\medskip

A \emph{quasigroup} is a set $Q$ with a binary operation $+$ such that all left translations $L_x:Q\to Q$, $y\mapsto x+y$ and all right translations $R_x:Q\to Q$, $y\mapsto y+x$ are bijections of $Q$. A quasigroup $(Q,+)$ is a \emph{loop} if it possesses a neutral element, that is, an element $0$ satisfying $0+x=x+0=x$ for all $x\in Q$.

A quasigroup $(Q,+)$ is called \emph{idempotent} if it satisfies the identity \begin{displaymath}
    x+x=x,
\end{displaymath}
\emph{medial} (also \emph{entropic} or \emph{abelian}) if it satisfies the identity
\begin{displaymath}
    (x+y)+(u+v)=(x+u)+(y+v),
\end{displaymath}
and \emph{distributive} if it satisfies the two identities
\begin{align*}
    x+(y+z) &=(x+y)+(x+z),\\
    (x+y)+z &=(x+z)+(y+z).
\end{align*}
A quasigroup $(Q,+)$ is \emph{trimedial} (also \emph{terentropic} or \emph{triabelian}) if every three elements of $Q$ generate a medial subquasigroup.
Belousov established the following connection between these types of quasigroups:

\begin{theorem}[\cite{Bel-dq}]\label{Th:Belousov}
A quasigroup is distributive if and only if it is trimedial and idempotent.
\end{theorem}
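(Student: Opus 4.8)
\emph{Proof plan.} I would split each direction into an easy half and a hard half. The easy half of ``$\Rightarrow$'' is idempotence: putting $y=z=x$ in the left distributive law gives $x+(x+x)=(x+x)+(x+x)$, and putting $x=y$ and then $z=x$ in the right distributive law gives $(x+x)+x=(x+x)+(x+x)$; writing $w=x+x$ these read $x+w=w+w=w+x$, and right-cancelling $w$ in $x+w=w+w$ forces $x=w=x+x$. The whole of ``$\Leftarrow$'' is also easy: in an idempotent medial quasigroup the substitutions $(a,b,c,d)\mapsto(x,y,x,z)$ and $(x,z,y,z)$ turn the medial law into the left and right distributive laws (using $x+x=x$); and since each distributive identity mentions only three variables, a trimedial idempotent $Q$ satisfies it iff every $\langle x,y,z\rangle$ does, which it does, being medial and idempotent.

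The real work is showing that a distributive $Q$ is trimedial, i.e.\ that $S:=\langle a,b,c\rangle$ is medial for all $a,b,c$. Here I would use the affine theory of distributive quasigroups. First note that distributivity says precisely that all $L_x,R_x$ are endomorphisms, hence (being bijective) automorphisms; combined with idempotence, $L_x$ and $R_x$ fix $x$. Working inside the distributive subquasigroup $S$, put $e:=a$ and form the principal loop isotope $x*y:=(x/e)\cdot(e\backslash y)=R_e^{-1}(x)\cdot L_e^{-1}(y)$; idempotence of $e$ makes $e$ its neutral element. The plan is then to invoke Belousov's theorem that this isotope is a \emph{commutative Moufang loop} --- extracting commutativity and the Moufang identity for $*$ from the two distributive laws for $\cdot$ is the technical core of this step --- and to record the linear representation $x\cdot y=\varphi(x)*\psi(y)$ with $\varphi:=R_e$, $\psi:=L_e$, which are automorphisms of $(S,*)$ (being automorphisms of $(S,\cdot)$ that fix $e$), commute (left distributivity and $e\cdot e=e$ give $e\cdot(x\cdot e)=(e\cdot x)\cdot e$, i.e.\ $L_eR_e=R_eL_e$), and satisfy $\varphi(x)*\psi(x)=x$ by idempotence.

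At that point $S$ is linear over a commutative Moufang loop via a pair of commuting automorphisms whose ``sum is the identity,'' and the proof is finished \emph{provided one shows that loop is an abelian group} $A$: then $x\cdot y=\varphi(x)+\psi(y)$ over an abelian group with $\varphi\psi=\psi\varphi$ is medial by the one-line check $(x\cdot y)\cdot(u\cdot v)=\varphi^{2}x+\varphi\psi y+\psi\varphi u+\psi^{2}v=(x\cdot u)\cdot(y\cdot v)$. So everything reduces to showing that \emph{the commutative Moufang loop attached to a three-generated distributive quasigroup is an abelian group}, and this is where I expect the main difficulty. The natural strategy is to bound the number of generators of $(S,*)$ by two (morally, one of $a,b,c$ has been ``used up'' as the loop identity $e$) and then apply the Bruck--Slaby nilpotency theorem --- a commutative Moufang loop on $n$ generators has class $<n$, so a two-generated one is an abelian group (even Moufang's own two-generator theorem suffices). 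The subtle point is that $(S,*)$ is a priori generated not by $\{b,c\}$ alone but by $\{b,c\}$ together with their images under $\varphi^{\pm1}$, so one must exploit the distributivity constraints --- the commutative-Moufang analogue of $\varphi+\psi=\mathrm{id}$, which forces a family of associators to vanish --- to show that these extra elements add nothing, i.e.\ that every associator in the relevant subloop is trivial. Carrying out this associator bookkeeping carefully is, I think, the whole substance of Belousov's theorem; a more computational alternative would be to verify by hand that the free distributive quasigroup on three generators satisfies the medial law.
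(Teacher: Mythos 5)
The paper does not actually prove Theorem \ref{Th:Belousov}: it is quoted from Belousov \cite{Bel-dq}, so your proposal can only be compared with the classical argument, and your outline does follow that route (affine representation over a commutative Moufang loop plus diassociativity/Bruck--Slaby). Your easy halves are fine: idempotence by cancellation from the two distributive laws, and the converse by specializing the medial law inside each three-generated subquasigroup, which inherits idempotence.

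The genuine gap is exactly at the step you yourself call ``the whole substance''. After passing to the isotope $x*y=R_e^{-1}(x)\cdot L_e^{-1}(y)$ with $e=a$ and recording $x\cdot y=\varphi(x)*\psi(y)$, $\varphi\psi=\psi\varphi$, $\varphi+\psi=id$, you propose to kill the associators using only the identity $\varphi+\psi=id$; that identity alone is not enough, and you never invoke the ingredient that actually makes the bookkeeping work, namely that the representation can be chosen \emph{central} (this is part of Theorem \ref{Th:BelousovSoublin}, equivalently $\hat\varphi=id+\varphi$ and $\hat\psi=id+\psi$ map into $Z(Q,+)$, cf. Lemmas \ref{Lm:JCentral} and \ref{Lm:alphahat}); centrality is itself forced by distributivity, not by $\varphi+\psi=id$. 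Granting it, your plan closes cleanly: modulo $Z(Q,+)$ both $\varphi$ and $\psi$ act as $-id$, hence $\cdot$, $/$ and $\backslash$ all act as $(u,v)\mapsto -u-v$, so with $e=a$ as loop identity every element of $\langle a,b,c\rangle_\cdot$ lies in $A:=\langle b,c\rangle_{+}+Z(Q,+)$, and $A$ is invariant under $\varphi^{\pm1},\psi^{\pm1}$. Now $\langle b,c\rangle_{+}$ is a group by diassociativity, and since associators are unchanged when arguments are shifted by central elements, $A$ is an abelian group; thus $(A,\cdot)$ is affine over an abelian group with commuting automorphisms, hence medial by Lemma \ref{Lm:AffineMedial}, and its subquasigroup $\langle a,b,c\rangle_\cdot$ is medial. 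Without centrality there is no evident way to show that the extra generators $\varphi^{\pm1}(b),\psi^{\pm1}(b),\dots$ ``add nothing''. Two smaller caveats: the CML-isotopy result you invoke is from the same Belousov paper, so be explicit that you are assuming the representation half and deriving only trimediality from it; and the fallback of checking the free three-generated distributive quasigroup ``by hand'' is not realistic, as that quasigroup is infinite.
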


Historically, distributive and medial quasigroups were one of the first nonassociative algebraic structures studied \cite{BM}. Their structure theory has been developed mostly in the 1960s and 1970s; see \cite{Ben-dq} or \cite[Section 3]{Sta-latin} for an overview. Quasigroups satisfying various forms of self-distributivity were one of the favorite topics of Belousov's school of quasigroup theory \cite{Bel}, and they have connections to other branches of mathematics as well \cite[Section 1]{Sta-latin}.


The classification of medial quasigroups is to a large extent a matter of understanding conjugation in the automorphism groups of abelian groups. This is explained in detail in \cite{SV}, for instance, where one can also find the complete classification of medial quasigroups up to order 63 (up to order 127 with a few gaps). Hou \cite{Hou} has stronger results on the enumeration of idempotent medial quasigroups.

In the present paper, we will focus on \emph{non-medial} trimedial quasigroups, which will require computational tools that are quite different from those of the medial case.

\medskip

One of the fundamental tools in quasigroup theory is loop isotopy. In particular, affine representations of quasigroups over various classes of loops are  tremendously useful in the study of quasigroups.
The Kepka theorem \cite{Kep} (see Theorem \ref{Th:Kepka}) represents trimedial quasigroups over commutative Moufang loops. It is a generalization of both the Toyoda-Murdoch-Bruck theorem \cite{BruckQ,Murdoch,Toyoda} (see Theorem \ref{Th:ToyodaBruck}) that represents medial quasigroups over abelian groups, and the Belousov-Soublin theorem \cite{Bel-dq,Sou} (see Theorem \ref{Th:BelousovSoublin}) that represents distributive quasigroups over commutative Moufang loops. Theorem \ref{Th:Kepka-iso}, proved in \cite{KBL}, solves the isomorphism problem for representations of trimedial quasigroups and forms the basis for our enumeration algorithm.
A detailed account on these representation theorems can be found in \cite{Sta-latin}.

\medskip

The class of commutative Moufang loops has attracted attention from the very onset of abstract loop theory.
A significant part of the fundamental text of loop theory, Bruck's \emph{``A survey of binary systems''} \cite{Bru}, has been written to develop tools for dealing with commutative Moufang loops.

Every finite commutative Moufang loop decomposes as a direct product of an abelian group of order coprime to 3 and of a commutative Moufang loop of order a power of $3$ \cite[Theorem 7C]{BruckTAMS}.
It was known to Bruck that there are no nonassociative commutative Moufang loops of order less than $3^4$. Kepka and N\v{e}mec \cite{KN} classified nonassociative commutative Moufang loops of orders $3^4$ and $3^5$ up to isomorphism: there are two of order $3^4$ and six of order $3^5$. See \cite{KN} for explicit constructions of these commutative Moufang loops, and \cite[Theorem IV.3.44]{Ben} for more results on commutative Moufang loops with a prescribed nilpotence class.

Every automorphism of a commutative Moufang loop decomposes as a direct product of automorphisms of the two coprime components. Therefore, thanks to Kepka's theorem, every finite non-medial trimedial quasigroup is a direct product of a medial quasigroup of order coprime to 3 and of a non-medial trimedial quasigroup of order a power of $3$, and there are no non-medial trimedial quasigroups of order less than $3^4$.

The classification of non-medial distributive quasigroups of order $3^4$ was also carried out in \cite{KN}: there are 6 such quasigroups up to isomorphism. Non-medial trimedial quasigroups of order $3^4$ were enumerated by Kepka, B\'en\'etau and Lacaze in \cite{KBL}: there are 35 of them up to isomorphism. Both \cite{KN} and \cite{KBL} use affine representations and a careful analysis of the automorphism groups of the two nonassociative commutative Moufang loops of order $3^4$, without using computers.

\medskip

The main result of this paper is a computer enumeration of non-medial distributive quasigroups and non-medial trimedial quasigroups of order $3^5$ up to isomorphism; see Table \ref{t:main2}. The paper is organized as follows.

In Section \ref{Sc:Representation} we summarize theoretical results that we use in the enumeration. We state the representation theorems and the isomorphism theorem, introduce the notions of $J$-central automorphisms and orthomorphisms, and finish the section with notes on representations of distributive Steiner and Mendelsohn quasigroups. Most of the contents of Section \ref{Sc:Representation} are present, implicitly or explicitly, in \cite{DGMOS,KBL,KN}.

In Section \ref{Sc:Classification} we describe in detail our main contribution, the classification algorithm (Theorem \ref{Th:Alg}).

In Section \ref{Sc:Results} we present the results of our calculations; see Tables \ref{t:main1} and \ref{t:main2}. We also give a sample of explicit constructions of non-medial distributive quasigroups of order $3^5$, including all those from which one can recover the non-affine distributive Mendelsohn triple systems of order~$3^5$. At the end, we discuss the phenomenon that for many small commutative Moufang loops all central automorphisms commute.

\subsection*{Basic definitions and results}

Loops will be denoted additively, assuming implicitly $Q=(Q,+,0)$. The \emph{center} $Z(Q)$ of a loop $Q$ is the set of all elements of $Q$ that commute and associate with all elements of~$Q$. The \emph{associator subloop} $A(Q)$ of a loop $Q$ is the smallest normal subloop of $Q$ generated by all associators $L^{-1}_{x+(y+z)}((x+y)+z)$. The \emph{automorphism group} of $Q$ will be denoted by $\aut(Q)$.

A loop $Q$ is said to have \emph{two-sided inverses} if for every $x\in Q$ there is $-x\in Q$ such that $x+(-x)=0=(-x)+x$. We then write $x-y$ as a shorthand for $x+(-y)$, and we define $J$ to be the inversion mapping
\begin{displaymath}
    J:Q\to Q,\quad x\mapsto -x.
\end{displaymath}
Clearly, $J$ is a permutation of $Q$ that commutes with all automorphisms of $Q$.

A loop $Q$ is \emph{power associative} if any element of $Q$ generates an associative subloop. A loop $Q$ is \emph{diassociative} if any two elements of $Q$ generate an associative subloop.

A loop $Q$ with two-sided inverses has the \emph{automorphic inverse property} if the inversion mapping $J$ is an automorphism, that is, if $-(x+y) = -x-y$ holds for every $x$, $y\in Q$. Note that if $Q$ has the automorphic inverse property then $J\in Z(\aut(Q))$. Commutative diassociative loops obviously satisfy the automorphic inverse property.

A loop $Q$ is \emph{Moufang} it it satisfies the identity $x+(y+(x+z)) = ((x+y)+x)+z$. By Moufang's theorem \cite{Moufang}, Moufang loops are diassociative. In a commutative Moufang loop $Q$, we have $x+x+x=3x\in Z(Q)$ for every $x\in Q$ \cite{BruckTAMS}. See \cite{Bel,Ben,BruckTAMS,Bru} for more results on commutative Moufang loops.

\section{Affine representation of trimedial quasigroups} \label{Sc:Representation}

\subsection{Affine representation and isomorphism theorem}\label{Ssec:representation}

In group theory, an automorphism $\alpha$ of a group $G=(G,+)$ is said to be \emph{central} if it commutes with all inner automorphisms of $G$. Equivalently, $\alpha\in\aut(G)$ is central if $Z(G)+\alpha(x)=Z(G)+x$ for every $x\in G$. It is well known that the set of all central automorphisms of $G$ forms a normal subgroup of $\aut(G)$. We generalize these concepts and results to loops as follows:

\begin{definition}
Let $Q$ be a loop and $\alpha:Q\to Q$ a mapping. Then $\alpha$ is said to be \emph{central} if $Z(Q)+\alpha(x)=Z(Q)+x$ for every $x\in Q$.
The set of all central automorphisms of $Q$ will be denoted by $\CA(Q)$.
\end{definition}

Note that if $Q$ is an abelian group then all mappings $\alpha:Q\to Q$ are central.

\begin{lemma}\label{Lm:CAut}
Let $Q$ be a loop. Then $\CA(Q)$ is a normal subgroup of $\mathrm{Aut}(Q)$.
\end{lemma}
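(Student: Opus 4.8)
The plan is to verify directly the three closure conditions defining a normal subgroup, using as the only nontrivial input that the center is a \emph{characteristic} subloop, i.e.\ $\beta(Z(Q))=Z(Q)$ for every $\beta\in\aut(Q)$. This last fact is immediate: an element $z$ lies in $Z(Q)$ exactly when it commutes and associates with every element of $Q$, a condition expressible by identities that are transported by the bijective homomorphism $\beta$, so $\beta$ permutes $Z(Q)$. Nothing else about $Z(Q)$ — not even that it is a subloop — is needed for the subgroup part, and for normality only this invariance.

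First I would note that $\mathrm{id}_Q\in\CA(Q)$ trivially. For closure under composition, take $\alpha,\beta\in\CA(Q)$ and $x\in Q$: applying the defining identity of $\alpha$ to the element $\beta(x)$ gives $Z(Q)+\alpha\beta(x)=Z(Q)+\beta(x)$, and the identity of $\beta$ gives $Z(Q)+\beta(x)=Z(Q)+x$; chaining these shows $\alpha\beta\in\CA(Q)$. For closure under inverses, substitute $\alpha^{-1}(x)$ for $x$ in $Z(Q)+\alpha(x)=Z(Q)+x$ to read off $Z(Q)+x=Z(Q)+\alpha^{-1}(x)$. Since $\CA(Q)\subseteq\aut(Q)$ and composition and inversion of automorphisms again yield automorphisms, $\CA(Q)$ is a subgroup of $\aut(Q)$.

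For normality, fix $\alpha\in\CA(Q)$, $\beta\in\aut(Q)$, and $x\in Q$, and set $a=\beta^{-1}(x)$, so $Z(Q)+\alpha(a)=Z(Q)+a$. Apply $\beta$ to both sides of this equality of subsets of $Q$: since $\beta$ is a bijective homomorphism we have $\beta(Z(Q)+c)=\beta(Z(Q))+\beta(c)$ for any $c\in Q$, and $\beta(Z(Q))=Z(Q)$ by the characteristic property, so the left-hand side becomes $Z(Q)+\beta\alpha\beta^{-1}(x)$ and the right-hand side becomes $Z(Q)+x$. Hence $\beta\alpha\beta^{-1}\in\CA(Q)$, as required. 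I do not expect a genuine obstacle here: conceptually this is just the observation that $\CA(Q)$ is the kernel of the natural map $\aut(Q)\to\aut(Q/Z(Q))$ obtained by passing to the quotient by the normal subloop $Z(Q)$, which would make normality automatic; the hands-on argument above simply avoids having to set up the quotient loop and its universal property. The only point deserving care is to keep the coset manipulations honest — one never needs $\alpha$ to respect the coset structure of $Z(Q)$, only to move an entire coset along the genuine automorphism $\beta$ and otherwise transitively chain equalities.
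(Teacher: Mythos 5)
Your proof is correct and follows essentially the same route as the paper's: a direct coset computation showing closure under composition and inverses, and normality via the fact that automorphisms carry $Z(Q)$ onto itself (the paper conjugates by $\gamma^{-1}\alpha\gamma$ and pulls $\gamma^{-1}$ through the coset, which is the same manipulation you perform with $\beta$). Your added remark about $\CA(Q)$ being the kernel of the induced action on $Q/Z(Q)$ matches the viewpoint the paper records separately in Lemma \ref{Lm:Referee}.
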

\begin{proof}
If $\alpha$, $\beta\in\CA(Q)$ then $Z(Q)+\alpha\beta(x) = Z(Q)+\beta(x)=Z(Q)+x$ and $Z(Q)+x = Z(Q)+\alpha\alpha^{-1}(x) = Z(Q)+\alpha^{-1}(x)$, so $\alpha\beta\in\CA(Q)$ and $\alpha^{-1}\in\CA(Q)$. If further $\gamma\in\mathrm{Aut}(Q)$, then $Z(Q)+\gamma^{-1}\alpha\gamma(x) = \gamma^{-1}(Z(Q)+\alpha\gamma(x)) = \gamma^{-1}(Z(Q) + \gamma(x)) = Z(Q)+x$, so $\gamma^{-1}\alpha\gamma\in\CA(Q)$.
\end{proof}

\begin{lemma}\label{Lm:CentralTSI}
Let $Q$ be a loop with two-sided inverses and $\alpha:Q\to Q$ a mapping. Then $\alpha$ is central if and only if $x-\alpha(x)\in Z(Q)$ for every $x\in Q$.
\end{lemma}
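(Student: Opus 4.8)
The plan is to unwind the equivalence directly from the definition of ``central'', using only that $Z(Q)$ is a normal subloop all of whose elements associate with every element of $Q$, that right translations of a loop are bijective, and that inverses here are two-sided. I would first record the routine fact that for $a$, $b\in Q$ one has $Z(Q)+a=Z(Q)+b$ if and only if $b\in Z(Q)+a$; the forward direction is clear since $0\in Z(Q)$, and the backward direction follows by writing $b=z+a$ with $z\in Z(Q)$ and pushing $w$ and $w-z$ through the (central, hence associative) element $z$ to obtain both inclusions $Z(Q)+b\subseteq Z(Q)+a$ and $Z(Q)+a\subseteq Z(Q)+b$.

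For the ``only if'' direction, fix $x\in Q$; since $0\in Z(Q)$, centrality of $\alpha$ gives $x=0+x\in Z(Q)+x=Z(Q)+\alpha(x)$, so $x=z+\alpha(x)$ for some $z\in Z(Q)$. Since $z$ associates with everything, $x-\alpha(x)=(z+\alpha(x))+(-\alpha(x))=z+\bigl(\alpha(x)+(-\alpha(x))\bigr)=z\in Z(Q)$, as required.

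For the ``if'' direction, fix $x$ and put $z:=x-\alpha(x)\in Z(Q)$. The one point requiring care is that an arbitrary loop with two-sided inverses need not satisfy the right inverse property, so one cannot simply ``add $\alpha(x)$ back'' to $x+(-\alpha(x))=z$ to recover $x$. Instead I would compute, using only that $z$ is central and hence associates, $(z+\alpha(x))+(-\alpha(x))=z+\bigl(\alpha(x)+(-\alpha(x))\bigr)=z=x+(-\alpha(x))$; since the right translation by $-\alpha(x)$ is injective, this forces $x=z+\alpha(x)\in Z(Q)+\alpha(x)$, and the preliminary fact then yields $Z(Q)+x=Z(Q)+\alpha(x)$. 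As $x$ was arbitrary, $\alpha$ is central. The argument is short; the only genuine obstacle is resisting the use of cancellation or inverse identities that hold in groups (or in Moufang loops) but not in a general loop, which is avoided by routing every manipulation through the central element $z$.
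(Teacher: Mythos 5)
Your proof is correct and follows essentially the same route as the paper: the paper's one-line argument asserts the equivalence of $Z(Q)+x=Z(Q)+\alpha(x)$, $Z(Q)+x-\alpha(x)=Z(Q)$ and $x-\alpha(x)\in Z(Q)$ using exactly the fact that central elements associate with everything, which is what you verify in detail. Your version merely makes explicit the coset bookkeeping and replaces the implicit cancellation by the injectivity of the right translation by $-\alpha(x)$, which is a faithful fleshing-out rather than a different method.
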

\begin{proof}
Since the elements of $Z(Q)$ associate with all elements of $Q$, the following conditions are equivalent: $Z(Q)+x=Z(Q)+\alpha(x)$, $Z(Q)+x-\alpha(x) = Z(Q)$, $x-\alpha(x)\in Z(Q)$.
\end{proof}

We will now show how to represent trimedial quasigroups over commutative Moufang loops. We start with a general definition.

\begin{definition}\label{Df:Affine}
Let $(Q,+)$ be a loop, let $\varphi$, $\psi$ be automorphisms of $(Q,+)$, and let $c\in Z(Q,+)$. Define a binary operation $*$ on $Q$ by
\begin{equation}\label{Eq:Affine}
    x*y=\varphi(x)+\psi(y)+c.
\end{equation}
The resulting quasigroup $(Q,*)$ is said to be \emph{affine over the loop $(Q,+)$}, it will be denoted by $\Q(Q,+,\varphi,\psi,c)$, and the quintuple $(Q,+,\varphi,\psi,c)$ will be called an \emph{arithmetic form} of $(Q,*)$.
\end{definition}

\begin{remark}
Definition \ref{Df:Affine} can be generalized in various ways, for instance by setting $x*y = (\varphi(x)+c)+(\psi(y)+d)$ for automorphisms $\varphi$, $\psi$ and arbitrary elements $c$, $d$. On the other hand, it can be specialized by assuming that $c=0$, that $\varphi\psi=\psi\varphi$, that the automorphisms $\varphi$, $\psi$ are central, etc. See \cite[Section 2.3]{Sta-latin} for a detailed discussion.
\end{remark}

\begin{lemma}\label{Lm:AffineIdempotent}
An affine quasigroup $(Q,*)=\mathcal Q(Q,+,\varphi,\psi,c)$ is idempotent if and only if $c=0$ and $\varphi+\psi=id$.
\end{lemma}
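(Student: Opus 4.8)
The plan is to just unwind the definitions. By \eqref{Eq:Affine}, idempotence of $(Q,*)$ means exactly that $x = x*x = \varphi(x)+\psi(x)+c$ holds for every $x\in Q$; note that no parenthesization need be specified here since $c\in Z(Q,+)$ associates with all elements. So the entire lemma reduces to showing that the pointwise identity
\begin{displaymath}
    \varphi(x)+\psi(x)+c = x \qquad \text{for all } x\in Q
\end{displaymath}
is equivalent to the conjunction of $c=0$ and $\varphi+\psi=\mathrm{id}$.

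For the forward implication I would evaluate the displayed identity at the neutral element $x=0$. Because $\varphi$ and $\psi$ are automorphisms they fix $0$, so the identity collapses to $c=0$; feeding this back in leaves $\varphi(x)+\psi(x)=x$ for all $x$, which is precisely the meaning of $\varphi+\psi=\mathrm{id}$. The converse is immediate: assuming $c=0$ and $\varphi(x)+\psi(x)=x$ for every $x$, we read off $x*x=\varphi(x)+\psi(x)+0=x$.

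There is no real obstacle in this argument; the only points worth a word of care are that $\varphi+\psi=\mathrm{id}$ must be understood as the pointwise condition $\varphi(x)+\psi(x)=x$ (over a commutative Moufang loop the pointwise sum of two automorphisms need not be an endomorphism, so nothing stronger is being asserted), and that the expression $\varphi(x)+\psi(y)+c$ is unambiguous only because $c$ lies in the center. I would keep the proof to a couple of lines accordingly.
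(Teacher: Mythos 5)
Your proof is correct and follows essentially the same route as the paper: evaluate $x*x=\varphi(x)+\psi(x)+c$ at $x=0$ to get $c=0$, conclude $\varphi+\psi=\mathrm{id}$ pointwise, and note the converse is immediate. The extra remarks about centrality of $c$ and the pointwise reading of $\varphi+\psi$ are fine but not needed beyond what the paper already does.
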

\begin{proof}
If $(Q,*)$ is idempotent then $x=x*x=\varphi(x)+\psi(x)+c$ for every $x\in Q$. With $x=0$ we deduce $c=0$. Then $\varphi(x)+\psi(x)=x$ for every $x\in Q$, so $\varphi+\psi=id$.

Conversely, if $\varphi+\psi=id$ and $c=0$ then $x*x=\varphi(x)+\psi(x)+c = x$.
\end{proof}

\begin{lemma}\label{Lm:AffineMedial}
An affine quasigroup $(Q,*)=\mathcal Q(Q,+,\varphi,\psi,c)$ is medial if and only if $(Q,+)$ is an abelian group and $\varphi\psi=\psi\varphi$.
\end{lemma}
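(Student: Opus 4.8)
The plan is to substitute the definition \eqref{Eq:Affine} of $*$ into the medial identity $(x*y)*(u*v)=(x*u)*(y*v)$, expand both sides inside the loop $(Q,+)$, and then read off what mediality forces, using throughout that $\varphi$, $\psi$ are bijective and that $c$ (hence also $\varphi(c)$, $\psi(c)$) lies in the centre.

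The ``if'' direction is routine: if $(Q,+)$ is an abelian group and $\varphi\psi=\psi\varphi$, then $*$ is assembled from the group operation and group endomorphisms, so expanding $(x*y)*(u*v)$ and $(x*u)*(y*v)$ in $(Q,+)$ and deleting the terms common to both sides leaves the identity $\varphi\psi(y)+\psi\varphi(u)=\varphi\psi(u)+\psi\varphi(y)$, which holds because $(Q,+)$ is commutative and $\varphi\psi=\psi\varphi$.

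For the ``only if'' direction I would first record, using that automorphisms preserve $Z(Q,+)$, that $\varphi(x*y)=(\varphi^2(x)+\varphi\psi(y))+\varphi(c)$ with $\varphi(c)\in Z(Q,+)$, and symmetrically for $\psi$. Iterating, and repeatedly invoking the fact that a central element commutes and associates with everything (so it may be pulled to the outside and its bracketing is immaterial), one obtains
\begin{displaymath}
(x*y)*(u*v)=\bigl((\varphi^2(x)+\varphi\psi(y))+(\psi\varphi(u)+\psi^2(v))\bigr)+e,
\end{displaymath}
where $e=\varphi(c)+\psi(c)+c$ is central, and the very same $e$ occurs in the analogous expansion of $(x*u)*(y*v)$. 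Since right translations of a quasigroup are bijections we may cancel $e$, so mediality of $(Q,*)$ is equivalent to
\begin{displaymath}
(\varphi^2(x)+\varphi\psi(y))+(\psi\varphi(u)+\psi^2(v))=(\varphi^2(x)+\varphi\psi(u))+(\psi\varphi(y)+\psi^2(v))\qquad(x,y,u,v\in Q).
\end{displaymath}

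It then remains to mine this identity. Because $\varphi^2$ and $\psi^2$ are bijections, substituting $x\mapsto\varphi^{-2}(a)$ and $v\mapsto\psi^{-2}(d)$ rewrites it as $(a+\varphi\psi(y))+(\psi\varphi(u)+d)=(a+\varphi\psi(u))+(\psi\varphi(y)+d)$ for all $a,d,y,u\in Q$. Taking $a=d=0$ and $u=0$, and using that automorphisms fix $0$, yields $\varphi\psi(y)=\psi\varphi(y)$ for all $y$, i.e.\ $\varphi\psi=\psi\varphi$. With this in hand the substitution $s=\varphi\psi(y)$, $t=\varphi\psi(u)$ (legitimate since $\varphi\psi$ is a bijection) collapses the identity to the bare medial law $(a+s)+(t+d)=(a+t)+(s+d)$ on $(Q,+)$; specialising $s=0$ gives associativity $a+(t+d)=(a+t)+d$, and specialising $a=d=0$ gives commutativity, so $(Q,+)$ is an abelian group, completing the proof. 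The one genuinely delicate step is the first expansion: one must check carefully that every occurrence of $c$ and of its images under $\varphi$, $\psi$ is indeed central and can be collected into a single cancellable term, since $(Q,+)$ is a priori neither associative nor commutative; everything after that reduction is elementary substitution.
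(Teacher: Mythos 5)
Your proposal is correct and takes essentially the same route as the paper's proof: expand the medial identity for $*$ inside $(Q,+)$, use centrality of $c$, $\varphi(c)$, $\psi(c)$ to collect and cancel the constant, and then specialize variables in the resulting identity to extract $\varphi\psi=\psi\varphi$, commutativity, and associativity. The only cosmetic difference is that you first normalize via $\varphi^{-2}$, $\psi^{-2}$ and obtain the full medial law on $(Q,+)$ (reading off associativity with $s=0$), whereas the paper sets $x=0$ and combines $r+(s+t)=s+(r+t)$ with commutativity.
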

\begin{proof}
Note that
\begin{displaymath}
    (x*u)*(v*y) = (\varphi\varphi(x)+\varphi\psi(u)+\varphi(c))
        + (\psi\varphi(v)+\psi\psi(y)+\psi(c)) + c.
\end{displaymath}
Since $\varphi(c)$, $\psi(c)$, $c$ are central, we see that $(Q,*)$ is medial if and only if
\begin{equation}\label{Eq:Medial}
    (\varphi\varphi(x)+\varphi\psi(u))+(\psi\varphi(v)+\psi\psi(y)) =
    (\varphi\varphi(x)+\varphi\psi(v))+(\psi\varphi(u)+\psi\psi(y)).
\end{equation}
If $(Q,+)$ is an abelian group and $\varphi\psi=\psi\varphi$ then \eqref{Eq:Medial} holds.

Conversely, suppose that \eqref{Eq:Medial} holds. With $x=y=v=0$ we deduce $\varphi\psi=\psi\varphi$ from \eqref{Eq:Medial}. Then with $x=y=0$ we deduce $\varphi\psi(u)+\varphi\psi(v) = \varphi\psi(v)+\varphi\psi(u)$, so $(Q,+)$ is commutative. Finally, with $x=0$ we deduce the identity $r+(s+t)=s+(r+t)$, which, combined with commutativity, yields associativity of $(Q,+)$.
\end{proof}

Let us now state the representation theorem that forms a basis for our enumeration algorithm.

\begin{definition}
Let $(Q,+)$ be a loop with two-sided inverses. We say that a quasigroup $(Q,*)$ is \emph{centrally affine over $(Q,+)$} if it admits an arithmetic form $(Q,+,\varphi,\psi,c)$ as in Definition $\ref{Df:Affine}$ such that $-\varphi$, $-\psi$ are central mappings of $(Q,+)$. We then call $(Q,+,\varphi,\psi,c)$ a \emph{central arithmetic form}.
\end{definition}

When $(Q,+)$ is an abelian group then there is no distinction between arithmetic forms and central arithmetic forms, and we will not use the adjective ``central''.

\begin{theorem}[Kepka \cite{Kep}]\label{Th:Kepka}
A quasigroup is trimedial if and only if it admits a central arithmetic form $(Q,+,\varphi,\psi,c)$, where $(Q,+)$ is a commutative Moufang loop and $\varphi\psi=\psi\varphi$.
\end{theorem}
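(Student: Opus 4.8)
The plan is to prove both implications. The easier direction is that a quasigroup admitting such a central arithmetic form $(Q,+,\varphi,\psi,c)$ with $(Q,+)$ commutative Moufang and $\varphi\psi=\psi\varphi$ is trimedial. Here I would take any three elements $a$, $b$, $d$ of the affine quasigroup $(Q,*)$ and show that the subquasigroup they generate is medial. Since $\varphi$, $\psi$ commute and $-\varphi$, $-\psi$ are central, Lemma~\ref{Lm:CentralTSI} tells us that $x+\varphi(x)\in Z(Q)$ and $x+\psi(x)\in Z(Q)$ for all $x$; in particular, modulo the center, $\varphi$ and $\psi$ act as $-\mathrm{id}$. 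The key structural fact I would invoke is that in a commutative Moufang loop any three elements generate a group (this follows from the fact that commutative Moufang loops are centrally nilpotent of class~$\le 2$ and that $3$-generated such loops are in fact abelian groups, or more elementarily from diassociativity together with the Moufang identity applied carefully). Thus the subloop of $(Q,+)$ generated by $a$, $b$, $d$ together with $c$ is an abelian group $G$; it is invariant under $\varphi$ and $\psi$ because $\varphi(x)\equiv -x$ and $\psi(x)\equiv -x$ modulo $Z(Q)$ and the central correction terms lie in $G$ as well (one checks $a+\varphi(a)$, etc., are expressible from the generators). Restricting to $G$, the operation $*$ is affine over an abelian group with commuting automorphisms, hence medial by Lemma~\ref{Lm:AffineMedial}. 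Therefore $\langle a,b,d\rangle_*$ is medial, and $(Q,*)$ is trimedial.

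For the converse — every trimedial quasigroup admits such a form — I would follow Kepka's original argument. Fix a trimedial quasigroup $(Q,*)$ and an element $e\in Q$. Define a new operation by principal isotopy through $e$, namely $x+y := R_e^{-1}(x) * L_e^{-1}(y)$ in the usual Belousov style, so that $(Q,+)$ is a loop with neutral element $e$ (renamed $0$). The heart of the matter is to show that $(Q,+)$ is a commutative Moufang loop and that $x*y$ takes the affine shape $\varphi(x)+\psi(y)+c$ with $\varphi$, $\psi$ commuting automorphisms whose negatives are central. Mediality of every $3$-generated subquasigroup translates, via the isotopy, into: every $3$-generated subloop of $(Q,+)$ is an abelian group, which is precisely (a known characterization of) the commutative Moufang loop property once one also checks commutativity and the inverse property locally. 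The affine representation of the translations — that $R_e^{-1} * L_e^{-1}$ linearizes — is obtained by exploiting mediality on the relevant triples to show that the maps $\varphi := R_0^{-1} \circ (\text{-})$ and $\psi := L_0^{-1}\circ(\text{-})$ (suitably defined) are endomorphisms, then automorphisms, and that they commute because the defining identity of mediality, restricted to triples involving $0$, forces $\varphi\psi = \psi\varphi$. Centrality of $-\varphi$ and $-\psi$ comes from idempotence-type relations: evaluating $x*x$ and comparing in $3$-generated subquasigroups shows $x+\varphi(x)+\psi(x)^{?}$-type expressions collapse modulo the center.

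I expect the main obstacle to be the converse direction, specifically establishing that the isotope $(Q,+)$ is genuinely a commutative Moufang loop and that the isotopy linearizes $*$ simultaneously and coherently. The subtlety is that mediality is only assumed on $3$-generated subquasigroups, not globally, so every identity one wants about $(Q,+)$ — commutativity, the Moufang identity, the automorphism property of $\varphi$ and $\psi$ — must be derived by carefully choosing which three elements to work with so that all the terms appearing in the desired identity lie inside a single medial subquasigroup. Bookkeeping which elements generate which subquasigroup, and checking that the subloop structures are compatible (so that $\varphi$, $\psi$ are well-defined globally and not just locally), is where the real work lies. Since this is a known theorem, I would in practice cite \cite{Kep} for the converse and present only the (short) forward direction in detail, as the forward direction is exactly what the enumeration algorithm consumes.
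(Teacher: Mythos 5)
The paper does not prove this theorem at all: it is quoted as a known result and attributed to Kepka \cite{Kep}, so the only part of your proposal that can be compared with the paper is your decision to cite \cite{Kep} for the converse, which matches. The problem is your ``short'' forward direction, which contains a genuine gap. Its key structural claim --- that in a commutative Moufang loop any three elements generate a group --- is false. The Bruck--Slaby theorem gives central nilpotence of class at most $n-1$ for an $n$-generated commutative Moufang loop, so a $3$-generated one has class at most $2$, which does \emph{not} force associativity; indeed the free commutative Moufang loop of exponent $3$ on three generators is nonassociative of order $81$ (it is one of the two nonassociative loops of order $3^4$ discussed in the paper, cf.\ \cite{KN}), so there exist triples of elements generating a nonassociative subloop. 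Your parenthetical justification (``commutative Moufang loops are centrally nilpotent of class $\le 2$'') is likewise false in general; the paper asserts it only for loops of order at most $3^5$. Consequently your reduction to Lemma \ref{Lm:AffineMedial}, which requires the ambient loop to be an abelian group, collapses, and with it the whole forward argument. A secondary issue: even granting a well-behaved subloop $G$ generated by $a,b,d,c$, invariance of $G$ under $\varphi$ and $\psi$ is not automatic, since $\varphi(x)=-x+\hat\varphi(x)$ with $\hat\varphi(x)\in Z(Q)$ but not necessarily in $G$; one must enlarge $G$ by the relevant central elements (harmless, but it needs saying).

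A correct forward argument cannot avoid working inside possibly nonassociative $3$-generated subloops. The standard route (and the substance of Kepka's proof) is to expand the medial law for $*$ as in the proof of Lemma \ref{Lm:AffineMedial}, reducing it to the exchange identity for the four elements $\varphi\varphi(x)$, $\varphi\psi(u)$, $\psi\varphi(v)$, $\psi\psi(y)$, and then to verify this identity using $\varphi\psi=\psi\varphi$ together with the specifically Moufang facts that $3x\in Z(Q)$, that associators are central (for the relevant subloops), and that $\hat\varphi$, $\hat\psi$ map into $Z(Q)$ (Lemma \ref{Lm:JCentral}); centrality is what lets the ``correction terms'' be pulled out of all parentheses. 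If you do not want to reproduce that computation, the honest course is to cite \cite{Kep} for both implications rather than present the forward direction as an easy consequence of the abelian-group case.
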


Lemmas \ref{Lm:AffineIdempotent}, \ref{Lm:AffineMedial} and Theorem \ref{Th:Belousov} show that the Kepka theorem generalizes both the Toyoda-Murdoch-Bruck theorem (set $(Q,+)$ to be an abelian group) and the Belousov-Soublin theorem (set $c=0$ and $\varphi+\psi=id$):

\begin{theorem}[Toyoda-Murdoch-Bruck \cite{BruckQ, Murdoch, Toyoda}]\label{Th:ToyodaBruck}
A quasigroup is medial if and only if it admits an arithmetic form $(Q,+,\varphi,\psi,c)$, where $(Q,+)$ is an abelian group and $\varphi\psi=\psi\varphi$.
\end{theorem}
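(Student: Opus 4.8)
The plan is to derive both implications from results already in hand, namely Theorem~\ref{Th:Kepka} and Lemma~\ref{Lm:AffineMedial}, using only the trivial remark that a medial quasigroup is trimedial: mediality is an identity, hence inherited by every subquasigroup, so in particular every three elements of a medial quasigroup generate a medial subquasigroup.

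For the implication $(\Leftarrow)$ I would simply invoke the ``if'' direction of Lemma~\ref{Lm:AffineMedial}: an abelian group is a loop, $\varphi$ and $\psi$ are automorphisms of it, and $\varphi\psi=\psi\varphi$, so $\mathcal Q(Q,+,\varphi,\psi,c)$ is medial. For $(\Rightarrow)$, let $(Q,*)$ be medial, hence trimedial. By Theorem~\ref{Th:Kepka} it has a central arithmetic form $(Q,+,\varphi,\psi,c)$ in which $(Q,+)$ is a commutative Moufang loop, $\varphi$ and $\psi$ are automorphisms of $(Q,+)$ with $\varphi\psi=\psi\varphi$, and $c\in Z(Q,+)$. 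Now apply the ``only if'' direction of Lemma~\ref{Lm:AffineMedial} to the affine quasigroup $(Q,*)=\mathcal Q(Q,+,\varphi,\psi,c)$: since it is medial and $\varphi$, $\psi$ are loop automorphisms, the lemma forces $(Q,+)$ to be an abelian group. Thus $(Q,+,\varphi,\psi,c)$ is an arithmetic form of $(Q,*)$ over an abelian group with commuting automorphisms, which is exactly the assertion.

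The only subtlety I foresee is making sure the argument is not circular: Theorem~\ref{Th:Kepka} is quoted from \cite{Kep}, and its proof there does not pass through the Toyoda-Murdoch-Bruck theorem, so invoking it here is legitimate. I therefore expect no real obstacle; the statement is classical and is recorded only to exhibit it as a specialization of Kepka's theorem. For a self-contained argument one can run the classical route instead: fix $e\in Q$ and pass to the principal isotope $x+y:=R_e^{-1}(x)*L_e^{-1}(y)$, where $R_e$, $L_e$ are the translations of $(Q,*)$; this is a loop with neutral element $e*e$ and satisfies $x*y=R_e(x)+L_e(y)$. Writing $A=R_e$, $B=L_e$, the medial law becomes the functional equation $A(a+b)+B(c+d)=A(a+c)+B(b+d)$ for all $a,b,c,d\in Q$, and one bootstraps from its specializations (for instance $a=d=0$, then $b=d=0$, then $a=0$) to show successively that $(Q,+)$ has two-sided inverses, is associative, is commutative, and that $x\mapsto A(x)-A(0)$ and $y\mapsto B(y)-B(0)$ are automorphisms; the constant is then $c=A(0)+B(0)$, and $\varphi\psi=\psi\varphi$ follows either from further specializations or, once the affine form over the abelian group is in place, from Lemma~\ref{Lm:AffineMedial} again. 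This bootstrap is the only laborious part, and it is the step I would expect to be the main obstacle in a from-scratch proof.
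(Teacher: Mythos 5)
The paper does not prove this statement at all: it is quoted as a classical theorem with the citations \cite{BruckQ,Murdoch,Toyoda}, and the only in-text justification is the remark, just after Theorem~\ref{Th:BelousovSoublin}, that Lemma~\ref{Lm:AffineMedial} shows the Kepka theorem specializes to it when $(Q,+)$ is an abelian group. Your main argument is precisely that remark made explicit, and it is correct: mediality is an identity, hence medial implies trimedial; Kepka's theorem supplies a central arithmetic form over a commutative Moufang loop with $\varphi\psi=\psi\varphi$; the ``only if'' half of Lemma~\ref{Lm:AffineMedial} then forces $(Q,+)$ to be an abelian group (and $c\in Z(Q,+)=Q$ is automatic), while the converse is the ``if'' half of the same lemma. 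Within this paper's logical structure there is no circularity, since Theorem~\ref{Th:Kepka} is imported as a black box; but be aware that as a freestanding proof of the 1941--1944 theorem this inverts history, and the original proofs of Kepka-type representation theorems do lean on the medial (or at least the local, few-generator) case, so your parenthetical assurance about \cite{Kep} is the one claim you could not check from this paper alone. Your fallback sketch via the principal isotope $x+y=R_e^{-1}(x)*L_e^{-1}(y)$, with $x*y=R_e(x)+L_e(y)$ and the bootstrap from specializations of the medial law, is exactly the classical route taken in the cited sources and is what an independent, self-contained proof would look like; as you note, the bookkeeping in that bootstrap is the only genuinely laborious step.
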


\begin{theorem}[Belousov-Soublin \cite{Bel-dq, Sou}]\label{Th:BelousovSoublin}
A quasigroup is distributive if and only if it admits a central arithmetic form $(Q,+,\varphi,\psi,0)$, where $(Q,+)$ is a commutative Moufang loop and $\varphi = id-\psi$.
\end{theorem}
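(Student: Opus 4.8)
\emph{Approach.} The plan is to obtain Theorem~\ref{Th:BelousovSoublin} as a direct consequence of three results already at our disposal: Belousov's characterization (Theorem~\ref{Th:Belousov}), Kepka's representation theorem (Theorem~\ref{Th:Kepka}), and Lemma~\ref{Lm:AffineIdempotent}. The guiding idea is that ``distributive'' splits, via Theorem~\ref{Th:Belousov}, into ``trimedial'' plus ``idempotent,'' and each half translates directly into a constraint on a central arithmetic form: trimediality supplies a commuting central arithmetic form over a commutative Moufang loop by Theorem~\ref{Th:Kepka}, while idempotence pins down $c=0$ and $\varphi+\psi=id$ by Lemma~\ref{Lm:AffineIdempotent}.

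\emph{Forward implication.} Suppose $(Q,*)$ is distributive. By Theorem~\ref{Th:Belousov} it is trimedial and idempotent. Trimediality and Theorem~\ref{Th:Kepka} give $(Q,*)=\mathcal Q(Q,+,\varphi,\psi,c)$ for some central arithmetic form with $(Q,+)$ a commutative Moufang loop and $\varphi\psi=\psi\varphi$. Applying Lemma~\ref{Lm:AffineIdempotent} to this very form and invoking idempotence of $(Q,*)$ yields $c=0$ and $\varphi+\psi=id$. For each $x$, the elements $x$ and $\psi(x)$ lie in a common associative subloop of the diassociative loop $(Q,+)$, so the equation $\varphi(x)+\psi(x)=x$ is equivalent to $\varphi(x)=x-\psi(x)$; as $x$ is arbitrary, $\varphi=id-\psi$. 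Thus $(Q,+,\varphi,\psi,0)$ is a central arithmetic form of the asserted type.

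\emph{Converse implication.} Suppose $(Q,*)$ admits a central arithmetic form $(Q,+,\varphi,\psi,0)$ with $(Q,+)$ a commutative Moufang loop and $\varphi=id-\psi$. First one checks the commutativity hypothesis of Theorem~\ref{Th:Kepka}: since $\psi\in\aut(Q,+)$ commutes with the inversion $J$, we have $\psi\varphi(x)=\psi(x-\psi(x))=\psi(x)-\psi\psi(x)=\varphi\psi(x)$, so $\varphi\psi=\psi\varphi$. Theorem~\ref{Th:Kepka} then shows $(Q,*)$ is trimedial. Moreover $\varphi=id-\psi$ gives $\varphi(x)+\psi(x)=(x-\psi(x))+\psi(x)=x$, i.e.\ $\varphi+\psi=id$, and with $c=0$ Lemma~\ref{Lm:AffineIdempotent} shows $(Q,*)$ is idempotent. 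By Theorem~\ref{Th:Belousov}, $(Q,*)$ is distributive.

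\emph{Main obstacle.} There is essentially none once Theorems~\ref{Th:Belousov} and~\ref{Th:Kepka} are granted: the statement is a repackaging of those two results together with Lemma~\ref{Lm:AffineIdempotent}. The only items needing a line of care are the identification of $\varphi+\psi=id$ with $\varphi=id-\psi$ (which rests on diassociativity of Moufang loops, letting $x$ and $\psi(x)$ generate an associative subloop) and the observation that $\varphi=id-\psi$ automatically makes $\varphi$ and $\psi$ commute, so Kepka's hypothesis is met for free. Were one to forbid the use of Theorem~\ref{Th:Kepka}, the real work would move to constructing the affine structure on a distributive quasigroup directly---in effect reproving the Belousov--Soublin representation---but that is unnecessary given the machinery already in place.
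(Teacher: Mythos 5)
Your derivation is correct and follows essentially the route the paper itself indicates: the paper states Theorem \ref{Th:BelousovSoublin} only as a cited classical result, but its surrounding remarks --- that Kepka's theorem specializes to it via Theorem \ref{Th:Belousov} and Lemma \ref{Lm:AffineIdempotent} by setting $c=0$ and $\varphi+\psi=id$, and that $\varphi\psi=\psi\varphi$ holds for free when $\varphi=id-\psi$ --- are exactly the steps you spell out, including the diassociativity point identifying $\varphi+\psi=id$ with $\varphi=id-\psi$. Nothing further is needed.
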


Note that in the Belousov-Soublin theorem, we have $\varphi\psi = (id-\psi)\psi =\psi-\psi^2 = \psi(id-\psi)=\psi\varphi$ for free.

We now present a solution to the isomorphism problem for centrally affine quasigroups that covers the representations in Theorems \ref{Th:Kepka}, \ref{Th:ToyodaBruck}, \ref{Th:BelousovSoublin}.

\begin{theorem}[\cite{KBL}]\label{Th:Kepka-iso}
Let $(Q_1,+_1)$, $(Q_2,+_2)$ be commutative Moufang loops. Two centrally affine quasigroups $\mathcal Q(Q_1,+_1,\varphi_1,\psi_1,c_1)$, $\mathcal Q(Q_2,+_2,\varphi_2,\psi_2,c_2)$ are isomorphic if and only if there is a loop isomorphism $f:(Q_1,+_1)\to(Q_2,+_2)$ and $u\in\mathrm{Im}(id-_1(\varphi_1+_1\psi_1))$ such that
\[\varphi_2 = f \varphi_1 f^{-1},\ \psi_2 = f \psi_1 f^{-1}\text{ and }c_2=f(c_1+_1u).\]
\end{theorem}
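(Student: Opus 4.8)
The plan is to prove both directions explicitly by tracking how an isomorphism transports the affine data. For the forward direction, suppose $g:(Q_1,*_1)\to(Q_2,*_2)$ is a quasigroup isomorphism, where $*_i$ is the operation of $\mathcal Q(Q_i,+_i,\varphi_i,\psi_i,c_i)$. The first step is to reduce to the case where the neutral element $0_1$ is fixed: I would compose $g$ with a suitable translation-type correction. Concretely, in an affine quasigroup the idempotents, and more generally the elements of the form $x$ with $x*_1 x$ in a controlled coset, are governed by $\img(id-_1(\varphi_1+_1\psi_1))$, which explains where the parameter $u$ enters. So I would first show that after adjusting $g$ by an inner map I may assume $g(0_1)=0_2$, and that the adjustment changes $c_1$ only by an element of $\img(id-_1(\varphi_1+_1\psi_1))$. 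Then $f:=g$ (the adjusted map) is a bijection with $f(0_1)=0_2$, and I must check it is a loop homomorphism $(Q_1,+_1)\to(Q_2,+_2)$. This is where the structure of the representation is used: $+_1$ can be recovered from $*_1$ together with the point $0_1$ and the maps $\varphi_1,\psi_1$ (for instance $x+_1 y = \varphi_1^{-1}(x*_1 0_1 - c_1 - \psi_1(0_1)) +\dots$, unwinding \eqref{Eq:Affine}); I expect to invoke the relevant recovery identity from \cite{KBL} or re-derive it. Once $f$ is a loop isomorphism, comparing $f(x*_1 y)$ with $f(x)*_2 f(y)$ and substituting $y=0_1$, then $x=0_1$, then general $x,y$, forces $\varphi_2 f = f\varphi_1$, $\psi_2 f = f\psi_1$, and finally $c_2 = f(c_1 +_1 u)$ for the accumulated $u\in\img(id-_1(\varphi_1+_1\psi_1))$.

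For the converse, assume $f:(Q_1,+_1)\to(Q_2,+_2)$ is a loop isomorphism and $u\in\img(id-_1(\varphi_1+_1\psi_1))$ with $\varphi_2=f\varphi_1 f^{-1}$, $\psi_2=f\psi_1 f^{-1}$, $c_2=f(c_1+_1 u)$. Write $u=(id-_1(\varphi_1+_1\psi_1))(w)$ for some $w\in Q_1$. The claim is that $g(x):=f(x+_1 w)$ is an isomorphism $\mathcal Q(Q_1,+_1,\varphi_1,\psi_1,c_1)\to\mathcal Q(Q_2,+_2,\varphi_2,\psi_2,c_2)$. One computes $g(x)*_2 g(y) = \varphi_2 f(x+_1 w) +_2 \psi_2 f(y+_1 w) +_2 c_2$, pushes $f$ out using the conjugation relations, and is left needing
\[
\varphi_1(x+_1 w) +_1 \psi_1(y+_1 w) +_1 (c_1+_1 u) = (x*_1 y) +_1 w,
\]
i.e. $\varphi_1(w)+_1\psi_1(w)+_1 u = w$ after using additivity of $\varphi_1,\psi_1$; since $u=w-_1\varphi_1(w)-_1\psi_1(w)$ this holds. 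The only care needed is that additions are being regrouped inside a commutative Moufang loop, not an abelian group, so I must stay within diassociative sub-configurations — but $\varphi_1(w),\psi_1(w),w,c_1,u$ and the arguments all lie in a single $\langle\dots\rangle$ that is small enough, and the central elements $c_1$, $u$ (note $\img(id-(\varphi_1+\psi_1))$ need not be central in general — this must be checked, or the computation arranged so associativity is only applied where it is legitimate) cause no trouble.

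The main obstacle I anticipate is precisely the bookkeeping in the non-associative setting: every rearrangement of a sum of four or more terms must be justified, either by diassociativity applied to a two-generated subloop, by centrality of $c_i$, or by a Moufang identity. A secondary subtlety is pinning down exactly which coset-correction is allowed in the forward direction — establishing that the ambiguity in making $g$ fix $0_1$ is measured by $\img(id-_1(\varphi_1+_1\psi_1))$ and nothing larger. I would handle the first by working throughout inside the subloop generated by the finitely many relevant elements and citing Moufang's theorem (diassociativity) liberally, and the second by a direct computation of $x*_1 x$ and the set of its values relative to $0_1$. Since this isomorphism theorem is quoted from \cite{KBL}, I would either reproduce their argument in this notation or, where convenient, cite \cite{KBL} for the delicate recovery-of-$+_1$ lemma and give the rest in detail.
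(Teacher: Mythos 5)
First, a point of comparison: the paper does not prove Theorem \ref{Th:Kepka-iso} at all --- it is quoted from \cite{KBL}, and the only argument supplied is the Remark that follows, checking that the formulation above is equivalent to the one in \cite{KBL} (essentially the substitution $u=w-_1(\varphi_1(w)+_1\psi_1(w))$ that you also use). So your attempt to prove the statement from scratch is a genuinely different undertaking, and it is about half successful. Your converse direction is essentially right: with $u=(id-_1(\varphi_1+_1\psi_1))(w)$ and $g(x)=f(x+_1w)$, the verification does reduce to $\varphi_1(w)+_1\psi_1(w)+_1u=w$. Your worry about whether $u$ is central also evaporates: for $J$-central $\varphi_1,\psi_1$ one has $x-_1(\varphi_1(x)+_1\psi_1(x))=3x-_1(\hat\varphi_1(x)+_1\hat\psi_1(x))\in Z(Q_1)$, exactly the computation in Lemma \ref{Lm:Action}. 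However, your justification of the regrouping would fail as stated: the subloop generated by $x$, $y$, $w$ is $3$-generated and need not be associative in a commutative Moufang loop, so working ``inside one small generated subloop'' does not legitimize the four-term rearrangement. The correct route is to write $\varphi_1=-id+\hat\varphi_1$, $\psi_1=-id+\hat\psi_1$ with $\hat\varphi_1,\hat\psi_1$ endomorphisms into $Z(Q_1)$ (Lemma \ref{Lm:alphahat}), use $3w\in Z(Q_1)$ and the commutative Moufang identity $2a+(b+c)=(a+b)+(a+c)$, so that everything reduces to central elements plus two-generated (hence associative) configurations.

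The genuine gap is the forward direction, which is the substantive half of the theorem. Your plan --- normalize a quasigroup isomorphism $g$ so that it fixes the neutral element and then verify additivity by ``recovering $+_1$ from $*_1$'' --- is circular: the recovery formula for $+_1$ involves $\varphi_1,\psi_1$, and to conclude $f(x+_1y)=f(x)+_2f(y)$ from $f(x*_1y)=f(x)*_2f(y)$ you would already need to know that $f$ intertwines $\varphi_1,\psi_1$ with $\varphi_2,\psi_2$, which is part of the conclusion. Nothing in the sketch shows that a normalized quasigroup isomorphism is a loop isomorphism, nor that the ambiguity introduced by the normalization is measured by $\mathrm{Im}(id-_1(\varphi_1+_1\psi_1))$ and nothing larger --- that is asserted, not proved. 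The actual argument in \cite{KBL} requires real loop-theoretic input (analysis of the isotopisms/autotopisms relating the affine forms over commutative Moufang loops, with the $J$-centrality of $\varphi_i,\psi_i$ used essentially); without such an ingredient the forward implication is not established. Deferring that part to \cite{KBL}, as you offer at the end, is legitimate --- it is what the paper itself does --- but then your text should be billed as a proof of the converse plus a translation between formulations (the content of the paper's Remark), not as a proof of the theorem.
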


\begin{remark}
The isomorphism test condition of Theorem \ref{Th:Kepka-iso} is stated differently in \cite{KBL}, namely as: \emph{There is a loop isomorphism $f:(Q_1,+_1)\to(Q_2,+_2)$ and $w\in Q_2$ such that}
\begin{displaymath}
    \varphi_2f = f \varphi_1,\quad \psi_2f = f \psi_1,\quad f(c_1)-_2c_2 = w-_2(\varphi_2(w)+_2\psi_2(w)).
\end{displaymath}
We claim that this condition is equivalent to the condition of Theorem \ref{Th:Kepka-iso}. First, because ``to be isomorphic'' is a symmetric relation, we can replace the above condition with: \emph{There is a loop isomorphism $f:(Q_2,+_2)\to (Q_1,+_1)$ and $w\in Q_1$ such that}
\begin{displaymath}
    \varphi_1f=f\varphi_2,\quad \psi_1f = f\psi_2,\quad f(c_2)-_1c_1 = w-_1(\varphi_1(w)+_1\psi_1(w)).
\end{displaymath}
Upon considering $f^{-1}$, we can further replace it with the statement: \emph{There is a loop isomorphism $f:(Q_1,+_1)\to (Q_2,+_2)$ and $w\in Q_1$ such that}
\begin{displaymath}
    \varphi_1 f^{-1}=f^{-1}\varphi_2,\quad \psi_1 f^{-1} = f^{-1}\psi_2,\quad f^{-1}(c_2)-_1c_1 = w-_1(\varphi_1(w)+_1\psi_1(w)).
\end{displaymath}
The condition on $c_2$ is then equivalent to $c_2 = f(c_1+_1 w -_1 (\varphi_1(w)+\psi_1(w)))$, which says that $c_2 = f(c_1+_1 u)$ for some $u\in \mathrm{Im}(id-_1(\varphi_1+_1\psi_1))$.
\end{remark}

Note that in the distributive case ($c=0$ and $\varphi+\psi=id$), the isomorphism test of Theorem \ref{Th:Kepka-iso} reduces to: \emph{There is a loop isomorphism $f:(Q_1,+_1)\to (Q_2,+_2)$ such that $\psi_2 = f\psi_1 f^{-1}$.}


\subsection{$J$-central mappings}\label{Ssec:J-centrality}

\begin{definition}
Let $Q$ be a loop, $\xi$ a permutation of $Q$ and $\alpha:Q\to Q$ a mapping. We say that $\alpha$ is \emph{$\xi$-central} if $\xi^{-1}\alpha$ is central.
\end{definition}

Observe the following:

\begin{lemma}
Let $Q$ be a loop and $\alpha$, $\xi$ automorphisms of $Q$. Then $\alpha$ is $\xi$-central if and only if $\alpha$ belongs to the coset $\xi\CA(Q)$.
\end{lemma}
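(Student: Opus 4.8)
The plan is simply to unwind the definitions. By definition, $\alpha$ is $\xi$-central exactly when $\xi^{-1}\alpha$ is a central mapping of $Q$. Since $\xi$ and $\alpha$ are both automorphisms of $Q$, their composite $\xi^{-1}\alpha$ is again an automorphism of $Q$; hence the condition ``$\xi^{-1}\alpha$ is central'' is equivalent to ``$\xi^{-1}\alpha\in\CA(Q)$'', by the very definition of $\CA(Q)$ as the set of central \emph{automorphisms}. Finally, since $\CA(Q)$ is a subgroup of $\aut(Q)$ by Lemma \ref{Lm:CAut}, we have $\xi^{-1}\alpha\in\CA(Q)$ if and only if $\alpha\in\xi\CA(Q)$. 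Chaining these three equivalences yields the statement.

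The only point requiring a moment's attention is the middle equivalence: the hypothesis that \emph{both} $\alpha$ and $\xi$ are automorphisms (rather than $\alpha$ being an arbitrary mapping and $\xi$ an arbitrary permutation, as in the definition of $\xi$-centrality) is precisely what guarantees that $\xi^{-1}\alpha$ lies in $\aut(Q)$, so that being a central mapping is the same as being a member of $\CA(Q)$. Without it, $\xi^{-1}\alpha$ could be a central mapping without belonging to $\CA(Q)$, and the coset reformulation would break down. Thus there is no genuine obstacle here: the lemma is a bookkeeping statement recording that, restricted to automorphisms, $\xi$-centrality is nothing more than membership in the left coset $\xi\CA(Q)$ inside $\aut(Q)$, which is the form in which it will be applied to the automorphisms $-\varphi$, $-\psi$ arising in central arithmetic forms.
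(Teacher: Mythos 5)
Your proof is correct: it is exactly the routine unwinding of the definitions of $\xi$-centrality, $\CA(Q)$, and coset membership that the paper has in mind when it states this lemma as a mere observation (the paper omits the proof entirely). Your remark that the automorphism hypotheses are what make ``$\xi^{-1}\alpha$ is a central mapping'' equivalent to ``$\xi^{-1}\alpha\in\CA(Q)$'' is the right point to flag, and nothing further is needed.
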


\begin{corollary}\label{Cr:JCentral}
Let $Q$ be a loop with the automorphic inverse property and $J$ the inversion mapping. Then the coset $J\CA(Q)$ is the set of all $J$-central mappings of $Q$.
\end{corollary}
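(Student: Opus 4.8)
\emph{Proof proposal.} The plan is to obtain this as an essentially immediate consequence of the preceding lemma, the only real content being the verification that its hypotheses apply. I read the statement with ``mapping'' understood as ``automorphism''; this is harmless here, since every element of the coset $J\CA(Q)$ is a composite of the automorphism $J$ with an element of $\CA(Q)\subseteq\aut(Q)$, hence is itself an automorphism, so the natural object to compare with $J\CA(Q)$ is the set of $J$-central automorphisms.

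First I would note that the automorphic inverse property is, by definition, precisely the assertion that $J\in\aut(Q)$. This is exactly what is needed both for the coset $J\CA(Q)$ to be defined inside $\aut(Q)$ and for the preceding lemma to be applicable with $\xi:=J$. I would also record the trivial identity $J^{2}=\mathrm{id}$, valid in any loop with two-sided inverses, so that $J^{-1}=J$; with this, the definition of $\xi$-centrality specializes to: $\alpha$ is $J$-central if and only if $J\alpha$ is a central mapping.

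The main step is then the application of the preceding lemma with $\xi:=J$, which gives directly that, among automorphisms of $Q$, the $J$-central ones are exactly the elements of $J\CA(Q)$. For completeness I would spell out both inclusions: if $\alpha=J\gamma$ with $\gamma\in\CA(Q)$, then $J^{-1}\alpha=\gamma$ is central, so $\alpha$ is $J$-central; conversely, if $\alpha$ is a $J$-central automorphism then $J^{-1}\alpha$ is central and, being a composite of the automorphisms $J^{-1}$ and $\alpha$, is itself an automorphism, whence $J^{-1}\alpha\in\CA(Q)$ and $\alpha=J(J^{-1}\alpha)\in J\CA(Q)$. I do not expect any genuine obstacle here: the argument is formal, and the only points meriting attention are the translation ``automorphic inverse property $\Leftrightarrow$ $J\in\aut(Q)$'' (so that the preceding lemma truly applies) and keeping track of the direction of composition in the definition of $\xi$-centrality, which is made symmetric by $J^{-1}=J$.
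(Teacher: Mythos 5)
Your proof is correct and takes essentially the same route the paper intends: the corollary is an immediate consequence of the preceding lemma applied with $\xi=J$, the automorphic inverse property supplying $J\in\aut(Q)$ (and $J^{-1}=J$), and the paper gives no further argument. Your reading of ``mappings'' as automorphisms is exactly how the statement is used later (e.g.\ in identifying $\CO{Q}$ with the $J$-central orthoautomorphisms), so nothing is lost.
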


\begin{remark}
$J$-central mappings were called \emph{$1$-central} in earlier papers  \cite{Kep,KN,Sta-latin}.
\end{remark}

We now give another useful characterization of $J$-central mappings.

For a loop $Q$ and a mapping $\alpha:Q\to Q$, let $\hat\alpha$ denote the mapping $id+\alpha$, that is,
\begin{displaymath}
    \hat\alpha:Q\to Q,\quad x\mapsto x+\alpha(x).
\end{displaymath}
If $Q$ has two-sided inverses, we have $\alpha(x)=-x+\hat\alpha(x)$.

\begin{lemma}\label{Lm:JCentral}
Let $Q$ be a loop with two-sided inverses and $\alpha:Q\to Q$ a mapping. Then $\alpha$ is $J$-central if and only if $\hat\alpha(x)\in Z(Q)$ for every $x\in Q$.
\end{lemma}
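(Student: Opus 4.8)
The plan is to unwind the definitions on both sides and reduce everything to Lemma~\ref{Lm:CentralTSI}. Recall that $\alpha$ is $J$-central means, by definition, that $J^{-1}\alpha$ is central. Since $Q$ has two-sided inverses, $J$ is a permutation with $J^{-1}=J$ (because $-(-x)=x$ holds in any loop with two-sided inverses, using the uniqueness of inverses that Moufang/diassociativity would guarantee — but in fact we only need $J$ to be an involution, which follows from $x+(-x)=0=(-x)+x$ forcing $-(-x)=x$). Hence $J^{-1}\alpha=J\alpha$, and $(J\alpha)(x)=-\alpha(x)$.

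First I would apply Lemma~\ref{Lm:CentralTSI} to the mapping $\beta:=J\alpha$: this lemma says $\beta$ is central if and only if $x-\beta(x)\in Z(Q)$ for every $x\in Q$. Substituting $\beta(x)=-\alpha(x)$, the condition becomes $x-(-\alpha(x))\in Z(Q)$, i.e.\ $x+\alpha(x)\in Z(Q)$, which is exactly $\hat\alpha(x)\in Z(Q)$ for every $x\in Q$. So the chain of equivalences is: $\alpha$ is $J$-central $\iff$ $J\alpha$ is central $\iff$ $x-(J\alpha)(x)\in Z(Q)$ for all $x$ $\iff$ $\hat\alpha(x)\in Z(Q)$ for all $x$.

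The only point that needs a little care is the identity $x-(-\alpha(x))=x+\alpha(x)$, i.e.\ that $-(-y)=y$; I would note this holds because $(-y)+y=0=y+(-y)$ exhibits $y$ as a two-sided inverse of $-y$, and in a loop two-sided inverses are unique when they exist (the neutral element and the translation bijections force it), so $-(-y)=y$. An even cleaner route, if one wants to avoid uniqueness-of-inverse subtleties, is to observe that $x-(J\alpha)(x)\in Z(Q)$ if and only if its image under $J$ lies in $Z(Q)$ (since $J$ preserves $Z(Q)$, as $Z(Q)$ is a characteristic subloop and $J$ need not be an automorphism in general — but here $Q$ has two-sided inverses and $Z(Q)$ is closed under taking inverses because central elements associate and commute, so $-z\in Z(Q)$ whenever $z\in Z(Q)$); one then computes the relevant sign directly. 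I expect the main (very mild) obstacle to be exactly this bookkeeping with the inversion map: making sure that "two-sided inverses'' alone, without diassociativity, suffices to conclude $-(-y)=y$ and that $Z(Q)$ is closed under $J$. Both are standard and the proof is otherwise a one-line application of Lemma~\ref{Lm:CentralTSI}.

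\begin{proof}
By definition, $\alpha$ is $J$-central if and only if $J^{-1}\alpha$ is central. Since $Q$ has two-sided inverses, every $y\in Q$ satisfies $(-y)+y=0=y+(-y)$, so $y$ is a two-sided inverse of $-y$; as two-sided inverses in a loop are unique, $-(-y)=y$, i.e.\ $J$ is an involution and $J^{-1}=J$. Hence $(J^{-1}\alpha)(x)=(J\alpha)(x)=-\alpha(x)$ for every $x\in Q$. By Lemma~\ref{Lm:CentralTSI} applied to the mapping $J\alpha$, the mapping $J\alpha$ is central if and only if $x-(J\alpha)(x)\in Z(Q)$ for every $x\in Q$. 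But $x-(J\alpha)(x)=x-(-\alpha(x))=x+\alpha(x)=\hat\alpha(x)$. Therefore $\alpha$ is $J$-central if and only if $\hat\alpha(x)\in Z(Q)$ for every $x\in Q$.
\end{proof}
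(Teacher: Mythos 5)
Your proof is correct and follows essentially the same route as the paper's: note $J^{-1}=J$, apply Lemma~\ref{Lm:CentralTSI} to $J\alpha$, and simplify $x-J\alpha(x)$ to $\hat\alpha(x)$. The extra care you take with $-(-y)=y$ (via uniqueness of solutions to $(-y)+z=0$ in a loop) is sound but only makes explicit what the paper leaves implicit.
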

\begin{proof}
Note that $J^{-1}=J$. The following statements are equivalent: $\alpha$ is $J$-central, $J\alpha$ is central, $x-J\alpha(x)\in Z(Q)$ for every $x\in Q$ (by Lemma \ref{Lm:CentralTSI}), $\hat\alpha(x)=x+\alpha(x)\in Z(Q)$ for every $x\in Q$.
\end{proof}

A stronger equivalence holds for endomorphisms:

\begin{lemma}\label{Lm:alphahat}
Let $Q$ be a loop with the automorphic inverse property and let $\alpha:Q\to Q$ be a mapping. Then $\alpha$ is a $J$-central endomorphism if and only if $\hat\alpha$ is an endomorphism into $Z(Q)$. Moreover,
\begin{displaymath}
    \ker(\alpha) = \{x\in Q:\alpha(x)=0\} = \{x\in Q:\hat\alpha(x)=x\} = \mathrm{Fix}(\hat\alpha).
\end{displaymath}
\end{lemma}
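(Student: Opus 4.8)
The plan is to reduce everything to Lemma~\ref{Lm:JCentral} and one elementary identity. Recall from that lemma that $\alpha$ is $J$-central exactly when $\hat\alpha(x)\in Z(Q)$ for every $x$, and recall that $\alpha(x)=-x+\hat\alpha(x)$ in any loop with two-sided inverses. The identity I would isolate and prove first is that, whenever $\hat\alpha(x),\hat\alpha(y)\in Z(Q)$,
\[
    \alpha(x)+\alpha(y) = \bigl(-x+\hat\alpha(x)\bigr)+\bigl(-y+\hat\alpha(y)\bigr) = -(x+y)+\bigl(\hat\alpha(x)+\hat\alpha(y)\bigr).
\]
This is the only place where the hypotheses really enter: the automorphic inverse property gives $-(x+y)=-x-y$, and since $\hat\alpha(x)$, $\hat\alpha(y)$ (hence also their sum, as $Z(Q)$ is a subloop) lie in $Z(Q)$, they associate and commute with everything, which lets one push them past $-x$ and $-y$ in spite of the possible non-associativity of $(Q,+)$. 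I expect this re-association bookkeeping to be the only delicate step; everything after it is formal.

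Granting the identity, both implications fall out at once. If $\alpha$ is a $J$-central endomorphism, then $\hat\alpha$ maps into $Z(Q)$ by Lemma~\ref{Lm:JCentral}, and, using that $\alpha$ preserves $+$ and then the identity,
\[
    \hat\alpha(x+y) = (x+y)+\bigl(\alpha(x)+\alpha(y)\bigr) = (x+y)+\Bigl(-(x+y)+\bigl(\hat\alpha(x)+\hat\alpha(y)\bigr)\Bigr) = \hat\alpha(x)+\hat\alpha(y),
\]
the last equality because $\hat\alpha(x)+\hat\alpha(y)$ is central; so $\hat\alpha$ is an endomorphism into $Z(Q)$. Conversely, if $\hat\alpha$ is an endomorphism into $Z(Q)$, then $\hat\alpha(x)\in Z(Q)$ for every $x$, so $\alpha$ is $J$-central by Lemma~\ref{Lm:JCentral}, and
\[
    \alpha(x+y) = -(x+y)+\hat\alpha(x+y) = -(x+y)+\bigl(\hat\alpha(x)+\hat\alpha(y)\bigr) = \alpha(x)+\alpha(y)
\]
by the identity, so $\alpha$ is an endomorphism.

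Finally, the description of $\ker(\alpha)$ needs neither hypothesis and holds for an arbitrary mapping $\alpha$: since the left translations of $Q$ are injective, $\alpha(x)=0$ holds if and only if $x+\alpha(x)=x+0$, that is, if and only if $\hat\alpha(x)=x$, which is exactly $x\in\mathrm{Fix}(\hat\alpha)$; and for a $J$-central endomorphism the leftmost set in the display is $\ker(\alpha)$ by definition. Thus the whole argument comes down to verifying the displayed identity — the one step requiring care — and then transcribing the two directions and the kernel description.
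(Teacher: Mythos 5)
Your proposal is correct and follows essentially the same route as the paper's proof: both directions reduce to rewriting $\alpha(x)=-x+\hat\alpha(x)$ and re-associating via the automorphic inverse property and the centrality of the values of $\hat\alpha$ (Lemma \ref{Lm:JCentral}), with the kernel description obtained by left cancellation. Isolating the displayed identity as a preliminary step is merely a cosmetic repackaging of the same computation the paper performs inline.
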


\begin{proof}
Throughout the proof, we will use Lemma \ref{Lm:JCentral} without reference.
Suppose that $\alpha$ is a $J$-central endomorphism. Then $\hat\alpha(x+y) = (x+y)+\alpha(x+y) = (x+y)+(\alpha(x)+\alpha(y)) = (x+y)+((-x+\hat\alpha(x))+(-y+\hat\alpha(y))) = (x+y)+(-x-y)+\hat\alpha(x)+\hat\alpha(y) =
(x+y)-(x+y)+\hat\alpha(x)+\hat\alpha(y) =
\hat\alpha(x)+\hat\alpha(y)$, where we have used the automorphic inverse property.

Conversely, suppose that $\hat\alpha$ is an endomorphism into $Z(Q)$. Then $\alpha(x+y) = -(x+y)+\hat\alpha(x+y) = (-x-y) + \hat\alpha(x)+\hat\alpha(y) = (-x+\hat\alpha(x))+(-y+\hat\alpha(y)) = \alpha(x)+\alpha(y)$, where we have again used the automorphic inverse property.

To finish the proof, note that $\alpha(x)=0$ if and only if $\hat\alpha(x)=x$.
\end{proof}

In particular, if $Q$ is a finite loop with the automorphic inverse property and $\alpha:Q\to Q$ is a mapping, then $\alpha$ is a $J$-central automorphism if and only if $\hat\alpha$ is an endomorphism into $Z(Q)$ with a unique fixed point.

\subsection{Orthomorphisms and orthoautomorphisms}

We say that a permutation $\alpha$ of a loop $Q$ with two-sided inverses is a (\emph{left}) \emph{orthomorphism} if the mapping $id-\alpha$ is also a permutation of $Q$. The set of all orthomorphisms of $Q$ will be denoted $\ort{Q}$.

\begin{remark}
Orthomorphisms were originally defined in \cite{JDM} for finite groups. Researchers now routinely work with orthomorphisms in arbitrary groups, but usually use the dual notion of a right orthomorphism ($-id+\alpha$ is a permutation). In loops with the automorphic inverse property, $id-\alpha$ is a permutation if and only if $-id+\alpha$ is a permutation, so there is no distinction between left and right orthomorphisms.
\end{remark}

An orthomorphism need not be an automorphism. For brevity, we call orthomorphisms that are also automorphisms \emph{orthoautomorphisms}. Thus $\CO{Q}$ is the set of all $J$-central orthoautomorphisms in any loop with the automorphic inverse property, cf. Corollary \ref{Cr:JCentral}.

\begin{lemma}\label{Lm:ortho}
Let $Q$ be a commutative Moufang loop and let $\alpha:Q\to Q$ be a mapping. Then $\alpha\in\CO{Q}$ if and only if $id-\alpha\in\CO{Q}$.
\end{lemma}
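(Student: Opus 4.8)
The plan is to reduce the claim to a single implication and then to observe that $J$-centrality confines all the relevant computations to abelian subloops, so that the non-associativity of $Q$ causes no trouble. First, I would record that $id-(id-\alpha)=\alpha$ holds for \emph{every} mapping $\alpha$ of $Q$: for a fixed $x$, both $x$ and $\alpha(x)$ lie in the subloop they generate, which is a commutative group by diassociativity, and inside this group $x-(x-\alpha(x))=\alpha(x)$. Hence the asserted equivalence follows once we prove just one implication, say: if $\alpha\in\CO{Q}$ then $id-\alpha\in\CO{Q}$ (to get the converse, apply this implication with $id-\alpha$ in place of $\alpha$ and use the identity just recorded). So assume $\alpha\in\CO{Q}$, i.e.\ $\alpha$ is a $J$-central orthoautomorphism, and set $\beta=id-\alpha$.

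That $\beta$ is a permutation is precisely the statement that $\alpha$ is an orthomorphism, and $id-\beta=\alpha$ is a permutation since $\alpha$ is an automorphism; thus $\beta\in\ort{Q}$. The substantive point is that $\beta$ is a $J$-central automorphism. Here I would use that $\alpha$, being a $J$-central endomorphism, satisfies (by Lemma~\ref{Lm:alphahat}) that $\hat\alpha=id+\alpha$ is an endomorphism of $Q$ into $Z(Q)$; in particular $\alpha(x)=-x+\hat\alpha(x)$ with $\hat\alpha(x)\in Z(Q)$. Working inside the commutative group generated by $x$ and the central element $\hat\alpha(x)$, one gets $\hat\beta(x)=x+\beta(x)=3x-\hat\alpha(x)$, which lies in $Z(Q)$ because $3x\in Z(Q)$ in every commutative Moufang loop. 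By Lemma~\ref{Lm:JCentral} this already shows $\beta$ is $J$-central. To upgrade this to ``$\beta$ is a $J$-central automorphism'', I would verify via Lemma~\ref{Lm:alphahat} that $\hat\beta$ is an endomorphism into $Z(Q)$: it is the difference of the map $x\mapsto 3x$ and the endomorphism $\hat\alpha$, and $x\mapsto 3x$ is itself an endomorphism into $Z(Q)$ because the identity $3(x+y)=3x+3y$ again reduces to a computation in the commutative group generated by $x$ and $y$ while $3x\in Z(Q)$; a difference of two endomorphisms with values in the abelian group $Z(Q)$ is again such an endomorphism. Then Lemma~\ref{Lm:alphahat} makes $\beta$ a $J$-central endomorphism, and it is bijective by the previous sentence, hence a $J$-central automorphism. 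Therefore $\beta=id-\alpha\in J\CA(Q)\cap\ort{Q}=\CO{Q}$.

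The only genuine difficulty is the failure of associativity: rearranging expressions such as $x-(x-\alpha(x))$ or $x+\beta(x)$ into $3x-\hat\alpha(x)$ is not a priori legitimate. The device that removes it is that, thanks to $J$-centrality, each such manipulation takes place inside an honest abelian group --- generated by one or two elements of $Q$ together with the pertinent central values of $\hat\alpha$ --- so every rearrangement is a routine group identity. I expect this localization, in combination with the two imported facts (that $\hat\alpha$ is an endomorphism into the center, Lemma~\ref{Lm:alphahat}, and that $3x\in Z(Q)$), to suffice, with no remaining obstacle.
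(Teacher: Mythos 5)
Your proposal is correct and follows essentially the same route as the paper's proof: reduce to one implication via the diassociativity identity $id-(id-\alpha)=\alpha$, then use Lemma \ref{Lm:alphahat} together with $3x\in Z(Q)$ to get $\hat\beta=3\,id-\hat\alpha$ mapping into $Z(Q)$. The only (harmless) difference is that you conclude $\beta\in\aut(Q)$ by checking that $\hat\beta$ is an endomorphism into $Z(Q)$ and invoking Lemma \ref{Lm:alphahat} in the converse direction, whereas the paper verifies $\beta(x+y)=\beta(x)+\beta(y)$ directly; you are in fact slightly more explicit than the paper about where bijectivity of $\beta$ comes from (namely from $\alpha$ being an orthomorphism).
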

\begin{proof}
Let $D=\CO{Q}$. In any diassociative loop we have $id-(id-\alpha)=\alpha$ because $x-(x-\alpha(x)) = \alpha(x)$. It therefore suffices to show that if $\alpha\in D$ then $\beta=id-\alpha\in D$. Suppose that $\alpha\in D$. By Lemma \ref{Lm:alphahat}, $\hat\alpha$ is an endomorphism into $Z(Q)$.

For every $x\in Q$ we have $\beta(x) = x-\alpha(x) = x-(-x+\hat\alpha(x)) = 2x-\hat\alpha(x)$. Hence $\beta(x)+\beta(y) = (2x-\hat\alpha(x))+(2y-\hat\alpha(y)) = (2x+2y)-(\hat\alpha(x)+\hat\alpha(y)) = 2(x+y)-\hat\alpha(x+y) = \beta(x+y)$, proving that $\beta\in\aut(Q)$. We also have $\hat\beta(x)=x+\beta(x)=3x-\hat\alpha(x)\in Z(Q)$ because $3x\in Z(Q)$, so $\beta$ is $J$-central by Lemma \ref{Lm:JCentral}. Finally, $id-\beta = id-(id-\alpha)=\alpha$ shows that $\beta$ is an orthomorphism.
\end{proof}

\begin{lemma}\label{Lm:conj}
Let $Q$ be a loop with two-sided inverses. Then the subsets $J\CA(Q)$ and $\CO{Q}$ of $\aut(Q)$ are closed under conjugation by elements of $\aut(Q)$.
\end{lemma}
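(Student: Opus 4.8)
The plan is to treat the two subsets separately, reducing both to the two basic facts already at our disposal: that $J$ commutes with every automorphism of $Q$, and that $\CA(Q)$ is a normal subgroup of $\aut(Q)$ (Lemma \ref{Lm:CAut}).

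First I would handle $J\CA(Q)$. Fix $\gamma\in\aut(Q)$ and $\alpha\in\CA(Q)$. Since $J\gamma=\gamma J$, we get
\[
\gamma^{-1}(J\alpha)\gamma=(\gamma^{-1}J\gamma)(\gamma^{-1}\alpha\gamma)=J(\gamma^{-1}\alpha\gamma),
\]
and $\gamma^{-1}\alpha\gamma\in\CA(Q)$ by normality, so $\gamma^{-1}(J\alpha)\gamma\in J\CA(Q)$.

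Then I would turn to $\CO{Q}$, recalling that by definition $\CO{Q}=J\CA(Q)\cap\ort{Q}$. Given $\alpha\in\CO{Q}$ and $\gamma\in\aut(Q)$, the previous paragraph already yields $\gamma^{-1}\alpha\gamma\in J\CA(Q)$, so it only remains to check that $\gamma^{-1}\alpha\gamma\in\ort{Q}$, i.e.\ that it is a permutation and that $id-\gamma^{-1}\alpha\gamma$ is a permutation. The first is immediate since $\alpha$ is a permutation. For the second, the key identity is
\[
\gamma^{-1}(id-\alpha)\gamma=id-\gamma^{-1}\alpha\gamma,
\]
verified pointwise: applying $\gamma^{-1}$ — an automorphism, hence additive, and one that commutes with $J$, hence compatible with $-$ — to $\gamma(x)-\alpha(\gamma(x))$ gives $x-\gamma^{-1}\alpha\gamma(x)$ for every $x\in Q$. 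Since $id-\alpha$ is a permutation and conjugation by $\gamma$ sends permutations to permutations, $id-\gamma^{-1}\alpha\gamma$ is a permutation; therefore $\gamma^{-1}\alpha\gamma\in\ort{Q}$, and combined with the first part $\gamma^{-1}\alpha\gamma\in\CO{Q}$.

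I do not expect a genuine obstacle here: the lemma is short, and the only step deserving a word of care is the identity $\gamma^{-1}(id-\alpha)\gamma=id-\gamma^{-1}\alpha\gamma$, i.e.\ that $\gamma^{-1}$ distributes over the difference $id-\alpha$. This is precisely where one invokes that automorphisms of a loop with two-sided inverses commute with $J$; note that the automorphic inverse property of $Q$ is \emph{not} needed.
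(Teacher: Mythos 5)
Your proof is correct and follows essentially the same route as the paper: closure of $J\CA(Q)$ from normality of $\CA(Q)$ (Lemma \ref{Lm:CAut}) plus the fact that $J$ commutes with automorphisms, and closure of the orthomorphism condition via the identity $(id-\alpha)^\gamma=id-\alpha^\gamma$, which the paper uses in exactly the same way. Your closing remark that the automorphic inverse property is not needed is also accurate and consistent with the lemma's hypothesis.
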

\begin{proof}
The first claim follows from the fact that $\CA(Q)$ is a normal subgroup of $\aut(Q)$ (see Lemma \ref{Lm:CAut}) and that $J$ commutes with all automorphisms of $Q$.

If $\alpha$ is an orthomorphism then $id-\alpha$ is a permutation of $Q$, hence $id-\alpha^\xi=(id-\alpha)^\xi$ is a permutation of $Q$ for any $\xi\in\aut(Q)$, and $\alpha^\xi$ is an orthomorphism.
\end{proof}

Here is a useful variation of the Belousov-Soublin theorem which will be used in Section~\ref{Sc:Results}:

\begin{proposition}\label{Cr:BelousovSoublin}
A quasigroup $(Q,*)$ is distributive if and only if there is a commutative Moufang loop $(Q,+)$ and a $J$-central orthoautomorphism $\psi$ of $(Q,+)$ such that
\[x*y = (2x-y)+\hat\psi(y-x).\]
\end{proposition}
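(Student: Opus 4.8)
The plan is to deduce this from the Belousov-Soublin theorem (Theorem \ref{Th:BelousovSoublin}) by translating the hypothesis ``$(Q,+,\varphi,\psi,0)$ is a central arithmetic form with $\varphi=id-\psi$'' into the single statement ``$\psi$ is a $J$-central orthoautomorphism of the commutative Moufang loop $(Q,+)$'', and then rewriting the affine operation $x*y=\varphi(x)+\psi(y)$ into the claimed shape.

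For the forward implication, assume $(Q,*)$ is distributive. By Theorem \ref{Th:BelousovSoublin} it has a central arithmetic form $(Q,+,\varphi,\psi,0)$ with $(Q,+)$ a commutative Moufang loop and $\varphi=id-\psi$. By Definition \ref{Df:Affine} both $\varphi$ and $\psi$ are automorphisms of $(Q,+)$, so $\psi$ is a permutation and $id-\psi=\varphi$ is a permutation, i.e., $\psi$ is an orthomorphism; and since the form is central, $-\psi$ is a central mapping, which is exactly the statement that $\psi$ is $J$-central (cf. Corollary \ref{Cr:JCentral}). Hence $\psi\in\CO{Q}$. For the converse, let $\psi\in\CO{Q}$ be a $J$-central orthoautomorphism of a commutative Moufang loop $(Q,+)$ and set $\varphi=id-\psi$. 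By Lemma \ref{Lm:ortho} we have $\varphi\in\CO{Q}$ as well, so $\varphi$ is an automorphism of $(Q,+)$, and both $-\varphi$ and $-\psi$ are central mappings because $\varphi$ and $\psi$ lie in $J\CA(Q)$ and $J$ is an involution. Thus $(Q,+,\varphi,\psi,0)$ is a central arithmetic form with $\varphi=id-\psi$, and Theorem \ref{Th:BelousovSoublin} shows that $\mathcal Q(Q,+,\varphi,\psi,0)$ is distributive. In either direction the resulting quasigroup operation is $x*y=\varphi(x)+\psi(y)$ with $\varphi=id-\psi$.

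It remains to rewrite this operation in the claimed form. Since $\psi$ is a $J$-central automorphism, Lemma \ref{Lm:alphahat} gives that $\hat\psi$ is an endomorphism of $(Q,+)$ with image in $Z(Q)$; in particular $\hat\psi(x)$ and $\hat\psi(y)$ are central, $\hat\psi(-x)=-\hat\psi(x)$, and $\hat\psi(y-x)=\hat\psi(y)-\hat\psi(x)$. Using $\psi(x)=-x+\hat\psi(x)$ together with the automorphic inverse property of the commutative Moufang loop $(Q,+)$, one computes $\varphi(x)=x-\psi(x)=2x-\hat\psi(x)$ and $\psi(y)=\hat\psi(y)-y$, where at each step the centrality of $\hat\psi(x)$, respectively $\hat\psi(y)$, is used to reassociate. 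Moving these central elements freely past the others then yields
\[\varphi(x)+\psi(y)=(2x-\hat\psi(x))+(\hat\psi(y)-y)=(2x-y)+(\hat\psi(y)-\hat\psi(x))=(2x-y)+\hat\psi(y-x),\]
which is the desired identity. Everything here is routine once Lemma \ref{Lm:alphahat} and the inverse and diassociativity properties of commutative Moufang loops are in hand; the only step that needs any care is the parenthesization bookkeeping in the last display, which is legitimate precisely because the $\hat\psi$-values occurring there are central and hence commute and associate with all elements of $(Q,+)$. I do not anticipate a genuine obstacle.
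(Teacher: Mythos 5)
Your proposal is correct and follows essentially the same route as the paper: reduce to Theorem \ref{Th:BelousovSoublin}, use Lemma \ref{Lm:ortho} to see that $\varphi=id-\psi$ is again a $J$-central orthoautomorphism, and rewrite $(id-\psi)(x)+\psi(y)$ as $(2x-y)+\hat\psi(y-x)$ via Lemma \ref{Lm:alphahat} and the centrality of the $\hat\psi$-values. The only difference is that you spell out the forward translation (automorphism plus centrality of $-\psi$ giving $\psi\in\CO{Q}$) which the paper leaves implicit in its appeal to Theorem \ref{Th:BelousovSoublin}.
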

\begin{proof}
By Lemma \ref{Lm:ortho}, if $(Q,+)$ is a commutative Moufang loop and $\psi\in\CO{Q,+}$ then $id-\psi\in\CO{Q,+}\subseteq J\CA(Q,+)$.
In view of Theorem \ref{Th:BelousovSoublin}, it remains to show that $(id-\psi)(x)+\psi(y) = (2x-y)+\hat\psi(y-x)$. By Lemma \ref{Lm:alphahat}, $\hat\psi$ is an endomorphism into $Z(Q,+)$. Therefore, $(x-\psi(x))+\psi(y) = (2x-\hat\psi(x)) + (-y+\hat\psi(y)) = (2x-y) + \hat\psi(y)-\hat\psi(x) = (2x-y) + \hat\psi(y-x)$.
\end{proof}

\subsection{Quasigroups corresponding to triple systems}\label{Ssc:triplesystems}

Certain distributive quasigroups correspond to interesting combinatorial designs.

A \emph{Steiner triple system} is a pair $(V,B)$, where $V$ is a set and $B$ is a collection of $3$-element subsets of $V$ such that for every distinct $x$, $y\in V$ there is a unique $z\in V$ such that $\{x,y,z\}\in B$ \cite{Kirkman}.

A \emph{Hall triple system} is a Steiner triple system $(V,B)$ such that for every $x\in V$ there exists an involutory automorphism of $(V,B)$ whose only fixed point is $x$ \cite{Hall}.

A \emph{Mendelsohn triple system} is a pair $(V,B)$, where $V$ is a set and $B$ is a collection of cyclically ordered triples $\langle x,y,z\rangle = \langle y,z,x\rangle = \langle z,x,y\rangle$ of distinct elements of $V$ such that for any ordered tuple $(x,y)$ of distinct elements of $V$ there is a unique $z\in V$ such that $\langle x,y,z\rangle\in B$ \cite{Mendelsohn}.

Given a Steiner or Mendelsohn triple system $(V,B)$, respectively, we can define a quasigroup operation on $V$ as follows: if $x=y$, let $x*y=x$, otherwise let $x*y=z$, where $z$ is the unique element of $V$ such that $\{x,y,z\}\in B$, respectively $\langle x,y,z\rangle\in B$. There is a one-to-one correspondence between Hall triple systems and distributive Steiner quasigroups, and between distributive Mendelsohn triple systems and distributive Mendelsohn quasigroups; see \cite{DGMOS} for details.
The following simple criterion identifies the relevant quasigroups in our classification results.

\begin{proposition}[{\cite[Proposition 2.1]{DGMOS}}]
Let $Q=(Q,+)$ be a commutative Moufang loop and let $\psi\in\CO{Q}$. The corresponding distributive quasigroup $\mathcal Q(Q,+,id-\psi,\psi,0)$ is:
\begin{itemize}
\item[(i)] Steiner if and only if $Q$ has exponent $3$ and $\psi(x)=-x$ for every $x\in Q$;
\item[(ii)] Mendelsohn if and only if $\psi^2(x)-\psi(x)+x=0$ for every $x\in Q$.
\end{itemize}
\end{proposition}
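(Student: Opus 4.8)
The plan is to unwind the quasigroup operation $x*y=(id-\psi)(x)+\psi(y)$ directly and compare it with the definitions of Steiner and Mendelsohn quasigroups. Throughout I will use Proposition \ref{Cr:BelousovSoublin} and Lemma \ref{Lm:alphahat}: since $\psi\in\CO{Q}$, the map $\hat\psi=id+\psi$ is an endomorphism of $Q$ into $Z(Q)$, and $\ker(\psi)=\mathrm{Fix}(\hat\psi)$. A useful first observation is that $(Q,*)$ is always idempotent (it is distributive), so the only thing at stake in both parts is the behavior of $*$ on distinct pairs.

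For part (i): a distributive quasigroup $(Q,*)$ is Steiner exactly when it is totally symmetric, i.e. $x*y=y*x$ and $x*(x*y)=y$ for all $x$, $y$. First I would show $x*y=y*x$ for all $x,y$ is equivalent to $id-\psi=\psi$, i.e. $2\psi=id$; on a commutative Moufang loop of exponent $3$ this forces $\psi=-id$ (since $2\psi=-\psi$), and conversely if $\psi=-id$ then $id-\psi=2\,id=-id$ too, giving commutativity. Then I would check that with $\psi=-id$ the second totally-symmetric law $x*(x*y)=y$ holds, using $x*y=-x-y$ and exponent $3$: $x*(x*y)=-x-(-x-y)=-x+x+y=y$ by diassociativity. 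For the converse direction, if $(Q,*)$ is Steiner then commutativity gives $\psi=-id$ as above, and I must deduce $Q$ has exponent $3$; the cleanest route is to note that idempotence plus $x*y=-x-y$ plus the Steiner law $x*(x*y)=y$ yields $-x-(-x-y)=y$, and setting $y$ appropriately (or using $x*(x*x)=x$ together with known facts) extracts $3x=0$. I should be careful here: I will instead derive exponent $3$ from the fact that $\psi=-id$ being an automorphism already holds in any commutative Moufang loop with the automorphic inverse property, so the exponent-$3$ condition must come from the \emph{quasigroup} being Steiner, most directly via $x*(x*y)=y \Leftrightarrow -x-(-x-y)=y \Leftrightarrow 2x+ (\text{associator terms}) =$ something forcing $3x=0$; alternatively cite that Hall triple systems have all elements of order $3$.

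For part (ii): a distributive (hence Mendelsohn) quasigroup corresponds to a Mendelsohn triple system precisely when it is \emph{semisymmetric}, i.e. $y*(x*y)=x$ for all $x$, $y$ (equivalently $(x*y)*x=y$). This is the standard correspondence used in \cite{DGMOS}, so I would state it as the criterion and then compute. Using $x*y=(id-\psi)(x)+\psi(y)$ and writing $z=x*y$, the condition $y*z=x$ becomes $(id-\psi)(y)+\psi\bigl((id-\psi)(x)+\psi(y)\bigr)=x$. Since $\psi$ is an automorphism and $\hat\psi$ lands in the center (so all the reassociations below are legitimate), this expands to $(id-\psi)(y)+(\psi-\psi^2)(x)+\psi^2(y)=x$, and collecting the $y$-terms gives $\bigl(id-\psi+\psi^2\bigr)(y)+\bigl(\psi-\psi^2\bigr)(x)=x$. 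This must hold for all $x,y$; setting $y=0$ forces $(\psi-\psi^2)(x)=x$ for all $x$ is wrong—rather, setting $x=0$ gives $(id-\psi+\psi^2)(y)=0$ for all $y$, i.e. $\psi^2(y)-\psi(y)+y=0$, which is exactly the stated condition; and conversely that single identity makes the $y$-coefficient vanish and simultaneously turns $\psi-\psi^2=id-(id-\psi+\psi^2)+ \cdots$ into $id$, recovering $x=x$. I would double-check the bookkeeping so that the $x$-coefficient genuinely reduces to $id$ under the hypothesis $\psi^2-\psi+id=0$ (it does: $\psi-\psi^2 = \psi - (\psi - id) = id$).

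The main obstacle I anticipate is \textbf{two bookkeeping subtleties} rather than any deep idea: first, making sure every regrouping of sums is justified because $\hat\psi(x)=x+\psi(x)$ is central, so that identities like $(id-\psi)(x)+\psi(y)=(2x-y)+\hat\psi(y-x)$ and the nested expansions in part (ii) are valid in a nonassociative loop; and second, in part (i), isolating \emph{where} the exponent-$3$ hypothesis is actually forced — commutativity of $*$ only gives $\psi=-id$, and one must squeeze $3x=0$ out of the remaining total-symmetry axiom (or invoke the known structure of Hall triple systems). Once those two points are handled, both equivalences are short computations. I would organize the write-up as: (a) recall the total-symmetry / semisymmetry reformulations of "Steiner" and "Mendelsohn" for distributive quasigroups; (b) do the $*$-computation for (ii) first since it is cleanest; (c) do (i), treating commutativity and the exponent-$3$ forcing separately.
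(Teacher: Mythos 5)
Your treatment of part (ii) is fine: Mendelsohn-ness is exactly idempotence plus semisymmetry, and your expansion of $y*(x*y)$ works; the regroupings you worry about are harmless because after writing $\psi=-id+\hat\psi$ (Lemma \ref{Lm:alphahat}) every term is a word in $x$, $y$ and central elements, so the whole computation takes place in an associative subloop, and the remark following the proposition lets you read $\psi^2(x)-\psi(x)+x=0$ without specifying brackets. (Note also that this paper does not prove the proposition at all --- it quotes it from \cite{DGMOS} --- so only the correctness of your argument is at issue, not agreement with an internal proof.)

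The genuine gap is in the forward direction of part (i), where your division of labor between the two Steiner axioms is the wrong way around and would fail if executed literally. Commutativity of $*$ gives only $\psi=id-\psi$, i.e.\ $2\psi(x)=x$, \emph{not} $\psi=-id$: on $Q=\Z_9$ the automorphism $\psi=5\,id$ lies in $\CO{Q}$ and makes $x*y=5x+5y$ commutative, yet $\psi\neq -id$ and the exponent is $9$. Conversely, once $\psi=-id$ is known, the law $x*(x*y)=y$ holds automatically in \emph{any} exponent, since then $x*y=2x-y$ and $x*(x*y)=2x-(2x-y)=y$ by diassociativity; so no amount of squeezing that axiom will yield $3x=0$, and your displayed chain $x*(x*y)=y\Leftrightarrow -x-(-x-y)=y$ already presupposes exponent $3$ (to rewrite $2x-y$ as $-x-y$), i.e.\ the conclusion. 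The repair is short but runs the other way: specialize $x*(x*y)=y$ at $y=0$ to get $(id-\psi)(x)+\psi\bigl((id-\psi)(x)\bigr)=\hat\psi\bigl((id-\psi)(x)\bigr)=0$ for all $x$; since $id-\psi$ is a permutation ($\psi$ is an orthomorphism), this forces $\hat\psi=0$, i.e.\ $\psi=-id$, and then commutativity at $y=0$ reads $2x=(id-\psi)(x)=\psi(x)=-x$, i.e.\ $3x=0$. The fallback of citing that Hall triple systems have exponent $3$ is not a clean substitute, since that fact concerns the commutative Moufang loop canonically attached to the triple system, and identifying it with the given $(Q,+)$ would itself require an argument.
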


\begin{remark}
In a Moufang loop we have $(x+y)+z=0$ if and only if $x+(y+z)=0$, so it is not necessary to specify the order of addition in the expression $\psi^2(x)-\psi(x)+x$ above.
\end{remark}

\begin{corollary}\label{Cr:Mend}
Let $Q=(Q,+)$ be a commutative Moufang loop and let $\psi\in\CO{Q}$. The corresponding distributive quasigroup $\mathcal Q(Q,+,id-\psi,\psi,0)$ is:
\begin{itemize}
	\item[(i)] Steiner if and only if $Q$ has exponent $3$ and $\hat\psi=0$;
	\item[(ii)] Mendelsohn if and only if $\hat\psi^2(x)-3\hat\psi(x)+3x=0$ for every $x\in Q$.
\end{itemize}
\end{corollary}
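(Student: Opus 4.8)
The plan is to deduce Corollary~\ref{Cr:Mend} from the preceding Proposition by rewriting each condition on $\psi$ as the stated condition on $\hat\psi$. Two facts will be used throughout: the identity $\hat\psi(x)=x+\psi(x)$, equivalently $\psi(x)=-x+\hat\psi(x)$, coming from the definition of $\hat\psi$; and the fact, furnished by Lemma~\ref{Lm:alphahat} applied to the $J$-central orthoautomorphism $\psi\in\CO{Q}$, that $\hat\psi$ is an endomorphism of $(Q,+)$ whose image lies in $Z(Q)$. Part~(i) then requires no computation at all: the condition $\hat\psi=0$ says precisely that $x+\psi(x)=0$, i.e.\ $\psi(x)=-x$, for every $x\in Q$, and the hypothesis that $Q$ has exponent~$3$ is common to both formulations, so the two versions of (i) are literally the same statement.

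For part~(ii) I will prove that $\psi^2(x)-\psi(x)+x=\hat\psi^2(x)-3\hat\psi(x)+3x$ for every $x\in Q$ (with $\hat\psi^2=\hat\psi\hat\psi$); the equivalence of the two Mendelsohn criteria follows at once. From $\psi(x)=-x+\hat\psi(x)$ and the automorphic inverse property of the commutative Moufang loop $(Q,+)$ one gets $-\psi(x)=x-\hat\psi(x)$. Since $\hat\psi$ is an endomorphism into $Z(Q)$, $\hat\psi(\psi(x))=\hat\psi(-x+\hat\psi(x))=-\hat\psi(x)+\hat\psi^2(x)$, whence $\psi^2(x)=-\psi(x)+\hat\psi(\psi(x))=-\psi(x)-\hat\psi(x)+\hat\psi^2(x)$. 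Substituting $-\psi(x)=x-\hat\psi(x)$ (twice) into $\psi^2(x)-\psi(x)+x=-\psi(x)-\hat\psi(x)+\hat\psi^2(x)-\psi(x)+x$ and collecting like terms yields $3x-3\hat\psi(x)+\hat\psi^2(x)$, as desired.

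The only delicate point is that the regroupings in this last computation are legitimate in the nonassociative loop $(Q,+)$. This is not a real obstacle: every summand other than $x$ itself --- namely $\hat\psi(x)$, $\hat\psi^2(x)$, and the multiples $2x$ and $3x$ --- lies in $Z(Q)$ and hence commutes and associates with everything, so each displayed expression is unambiguous (cf.\ the remark preceding the corollary), and the manipulations reduce to abelian-group arithmetic together with the identities $-(a+b)=-a-b$ and $a+a+a=3a$ valid in $(Q,+)$. I would record these justifications in a single sentence rather than expanding each step.
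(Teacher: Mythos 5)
Your proof is correct and follows essentially the same route as the paper, which simply substitutes $\psi=-id+\hat\psi$ and computes $\psi^2-\psi+id=(-id+\hat\psi)^2-(-id+\hat\psi)+id=\hat\psi^2-3\hat\psi+3id$, so your element-wise expansion and the justification of the regroupings (central summands plus diassociativity) is just a more detailed version of the paper's one-line calculation. One minor slip in your closing remark: $2x$ need \emph{not} lie in $Z(Q)$ in a commutative Moufang loop (only $3x$ is guaranteed central), but nothing in your computation actually uses this, since the only non-central summands to be collected are copies of $x$ and of elements of $Z(Q)$, and $x+x+x$ is unambiguous by diassociativity.
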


\begin{proof}
Part (i) is obvious. For (ii), we calculate $\psi^2-\psi+id = (-id+\hat\psi)^2-(-id+\hat\psi)+id = \hat\psi^2-3\hat\psi+3id$.
\end{proof}

In particular, if a commutative Moufang loop $Q$ has exponent 3 then the corresponding distributive quasigroup is Steiner if and only if $\hat\psi=0$, and it is Mendelsohn if and only if $\hat\psi^2=0$.

The classification of the respective quasigroups directly translates into the classification of the corresponding triple systems.
Non-medial distributive Mendelsohn quasigroups were enumerated up to order $3^4$ in \cite{DGMOS}, and non-medial distributive Steiner quasigroups were enumerated up to order $3^6$ in \cite{Ben-dq}. In the present paper, we extend the classification in the Mendelsohn case to order $3^5$.

\section{The classification algorithm}\label{Sc:Classification}

\subsection{Outline of the algorithm}

Theorem \ref{Th:Kepka-iso} suggests the following algorithm for the classification of centrally affine quasigroups over a given commutative Moufang loop $Q=(Q,+)$. We calculate the set $J\CA(Q)\times J\CA(Q)\times Z(Q)$ and filter it subject to the equivalence induced by the condition in Theorem \ref{Th:Kepka-iso}. To obtain trimedial quasigroups, we consider only triples $(\varphi,\psi,c)$ satisfying $\varphi\psi=\psi\varphi$. To obtain distributive quasigroups, we consider only triples $(\varphi,\psi,c)$ satisfying $c=0$ and $\varphi+\psi=id$.

To complete the classification for a fixed order $n$, it suffices to consider the disjoint union of the classifications obtained for each commutative Moufang loop of order $n$ because isomorphic centrally affine quasigroups have isomorphic underlying loops; see Theorem \ref{Th:Kepka-iso}. To obtain non-medial quasigroups, we consider only nonassociative loops; see Lemma \ref{Lm:AffineMedial}.

Essentially the same idea was used in \cite{KBL,KN} to classify trimedial and distributive quasigroups of order $3^4=81$ by hand. Manual classification is out of the question for order~$3^5$, and even straightforward computer calculation is insufficient since the size of the set $J\CA(Q)\times J\CA(Q)\times Z(Q)$ is of the magnitude $10^8$ for some of the loops under consideration.

In the rest of this section we describe how to speed up the algorithm.

\subsection{Calculating automorphism groups}

Recall that all six commutative Moufang loops of order $243$ were constructed by Kepka and N\v{e}mec \cite{KN}. Moufang loops of order $81$ were classified by Nagy and Vojt\v{e}chovsk\'y in \cite{NagyVojtechovsky64and81}, and Moufang loops of order $243$ were classified by Slattery and Zenisek in \cite{SlatteryZenisek}. The $71$ nonassociative Moufang loops of order $243$ can be found in the \texttt{LOOPS} \cite{loops} package for \texttt{GAP} \cite{GAP} and can be obtained by calling \texttt{MoufangLoop(243,$i$)}. The six nonassociative commutative Moufang loops correspond to the indices $i\in\{1,2,5,56,57,67\}$.

The default method in \texttt{LOOPS} for calculating automorphism groups of loops is powerful enough to calculate automorphism groups of Moufang loops of order $81$ and even of some loops of order $243$. We adopted the default algorithm, made a better use of global variables and ran it with different choices of generators (to which the algorithm is highly sensitive). We succeeded in calculating the automorphism groups for the six commutative Moufang loops of order $243$. The longest calculation, for \texttt{MoufangLoop(243,5)}, took several hours.

\subsection{Calculating central and $J$-central automorphisms}

We do not calculate the sets $\CA(Q)$, $J\CA(Q)$ and $\CO{Q}$ directly by filtering $\aut(Q)$ because $\aut(Q)$ can be too large. Our approach is based on the following observation.

\begin{lemma}\label{Lm:Referee}
Let $Q$ be a loop and $H$ a subgroup of $\mathrm{Aut}(Q)$ containing $\CA(Q)$. Then $\CA(Q)$ is the kernel of the natural action of $H$ on $Q/Z(Q)$.
\end{lemma}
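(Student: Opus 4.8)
The plan is to first verify that the action referred to in the statement is well defined, and then to obtain the conclusion by directly unwinding the definition of a central automorphism.

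First I would recall the standard fact that the center $Z(Q)$ of a loop $Q$ is a normal subloop; in fact it is a characteristic subloop, so every $\alpha\in\aut(Q)$ satisfies $\alpha(Z(Q))=Z(Q)$. Consequently the quotient loop $Q/Z(Q)$, whose elements are the cosets $Z(Q)+x$, is well defined, and every $\alpha\in\aut(Q)$ induces a permutation $\bar\alpha$ of $Q/Z(Q)$ by the rule $\bar\alpha(Z(Q)+x)=Z(Q)+\alpha(x)$. The assignment $\alpha\mapsto\bar\alpha$ is a homomorphism, and its restriction to $H$ is the natural action of $H$ on $Q/Z(Q)$ referred to in the statement.

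Next I would identify the kernel of this action. By definition, an element $\alpha\in H$ lies in the kernel precisely when $\bar\alpha$ is the identity permutation of $Q/Z(Q)$, that is, when $Z(Q)+\alpha(x)=Z(Q)+x$ for every $x\in Q$. This is exactly the defining condition for $\alpha$ to be a central mapping, so the kernel equals $H\cap\CA(Q)$. Since $\CA(Q)\subseteq H$ by hypothesis, the kernel is precisely $\CA(Q)$, as claimed.

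The proof is essentially a translation of definitions; the only step deserving a word of care is the well-definedness of the induced action, i.e.\ the observation that every automorphism of $Q$ maps $Z(Q)$ onto itself. I do not expect any genuine obstacle.
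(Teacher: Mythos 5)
Your proposal is correct and follows essentially the same argument as the paper: the kernel condition $\alpha(Z(Q)+x)=Z(Q)+\alpha(x)=Z(Q)+x$ for all $x\in Q$ is exactly the definition of a central automorphism, and the hypothesis $\CA(Q)\subseteq H$ turns $H\cap\CA(Q)$ into $\CA(Q)$. Your additional remarks on the well-definedness of the induced action (the center being characteristic) are fine, though the paper leaves them implicit in the phrase ``natural action.''
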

\begin{proof}
An automorphism $\alpha\in H$ is in the kernel of the action if and only if $\alpha(Z(Q)+x) = Z(Q)+\alpha(x)$ is equal to $Z(Q)+x$ for every $x\in Q$, which says precisely that $\alpha\in\CA(Q)$.
\end{proof}

We can apply Lemma \ref{Lm:Referee} to $H=\mathrm{Aut}(Q)$, which has been obtained above. However, it is possible to calculate $\CA(Q)$ faster using a proper subgroup $H$ of $\mathrm{Aut}(Q)$ as follows.

The standard algorithm for calculating automorphisms of a given algebraic structure attempts to extend a partial map defined on a fixed generating set into an automorphism, while employing various isomorphism invariants to restrict possible images of the generators. Let $X$ be a set of generators of a loop $Q$. Whenever a choice is being made for the image of $x\in X$, we restrict the choice to the coset $Z(Q)+x$. Since we enforce this condition only for generators, the algorithm can yield a subgroup $H$ of $\mathrm{Aut}(Q)$ properly containing $\CA(Q)$. Lemma \ref{Lm:Referee} then allows us to calculate the actual group $\CA(Q)$ as the kernel of the action of $H$.

Having $\CA(Q)$ at our disposal, we can easily calculate the coset $J\CA(Q)$, and filter its elements to obtain $\CO{Q}$.


To finish the classification of \emph{distributive} quasigroups, various subgroups $U$ of $\mathrm{Aut}(Q)$ can be used to filter $\CO{Q}$ up to conjugacy in $U$ (which makes sense thanks to Lemma \ref{Lm:conj}). This is not necessarily as powerful as the conjugacy in the entire group $\mathrm{Aut}(Q)$, but it reduces the number of elements of $\CO{Q}$ to be considered in the final stage, where we employ the entire $\mathrm{Aut}(Q)$ to finish the classification. In our implementation, we used for $U$ the pointwise stabilizer of $Z(Q)$ in $\aut(Q)$.

\subsection{Handling the action on $J\CA(Q)\times J\CA(Q)\times Z(Q)$}

For trimedial quasigroups, we must find a way to handle the equivalence on $J\CA(Q)\times J\CA(Q)$ and the relation between $c_1$ and $c_2$ in the isomorphism test of Theorem \ref{Th:Kepka-iso}.

Consider any group $G$ and a subset $X\subseteq G$ closed under conjugation in $G$. (Later we will take $G=\mathrm{Aut(Q)}$ and $X=J\CA(Q)$, cf. Lemma \ref{Lm:conj}.) Then $G$ acts on $X\times X$ by simultaneous conjugation in both coordinates, i.e., $(\alpha,\beta)^\gamma = (\alpha^\gamma,\beta^\gamma)$. To calculate orbits on $X\times X$, we take advantage of the following well-known result.

\begin{lemma}\label{Lm:Orbits}
Let $G$ be a group acting on a set $X$. Let $O$ be a complete set of orbit representatives of the action, and for every $x\in O$ let $O_x$ be a complete set of orbit representatives of the action of the stabilizer $G_x$ on $X$. Then
\begin{displaymath}
    \{(a,b):a\in O,\,b\in O_a\}
\end{displaymath}
is a complete set of orbit representatives of the action of $G$ on $X\times X$ given by $(x,y)^g = (x^g,y^g)$.
\end{lemma}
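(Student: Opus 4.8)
The plan is to verify directly the two defining properties of a complete set of orbit representatives for the diagonal action of $G$ on $X\times X$: that the set $S=\{(a,b):a\in O,\ b\in O_a\}$ meets every orbit, and that it meets each orbit at most once. Neither part requires anything beyond the orbit--stabilizer bookkeeping, so the proof will be short.

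For the covering property, I would take an arbitrary pair $(x,y)\in X\times X$ and normalize it in two stages. First, since $O$ is a complete set of representatives for the action of $G$ on the first coordinate, there is $g\in G$ and a (unique) $a\in O$ with $x^g=a$, and then $(x,y)^g=(a,y^g)$. Second, $y^g\in X$ and $O_a$ is a complete set of representatives for the action of the stabilizer $G_a$ on $X$, so there is $h\in G_a$ with $(y^g)^h=b$ for some $b\in O_a$. Because $h$ fixes $a$, applying $h$ gives $(x,y)^{gh}=(a^h,(y^g)^h)=(a,b)\in S$, so $(x,y)$ lies in the same $G$-orbit as an element of $S$.

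For uniqueness, I would assume $(a,b),(a',b')\in S$ lie in a common orbit, say $(a,b)^g=(a',b')$ for some $g\in G$. Comparing first coordinates gives $a^g=a'$, so $a$ and $a'$ lie in the same $G$-orbit on $X$; since both are in $O$ and $O$ has exactly one representative per orbit, $a=a'$, and hence $g\in G_a$. Comparing second coordinates then gives $b^g=b'$ with $g\in G_a$, so $b$ and $b'$ lie in the same $G_a$-orbit on $X$; since both are in $O_a$, we get $b=b'$. Thus $(a,b)=(a',b')$, and $S$ contains exactly one element of each orbit.

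There is no real obstacle here; the argument is purely organizational. The one point deserving care is that the second-stage element $h$ must be chosen inside $G_a$ rather than merely in $G$, so that applying it does not disturb the first coordinate that has already been moved into $O$; this is exactly why the sets $O_a$ are defined using the stabilizers $G_a$ and not $G$ itself.
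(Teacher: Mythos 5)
Your proof is correct and follows essentially the same route as the paper: normalize the first coordinate to its unique representative in $O$, then observe that two pairs with the same first coordinate $a$ lie in a common $G$-orbit exactly when their second coordinates lie in a common $G_a$-orbit. The paper states this more tersely, but the content is identical.
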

\begin{proof}
For every $(x,y)\in X\times X$ there is a unique $a\in O$ and some $z\in X$ such that $(x,y)$ and $(a,z)$ are in the same orbit. For a fixed $a\in O$ and some $u$, $v\in X$, we have $(a,u)$ in the same orbit as $(a,v)$ if and only if $u$, $v$ belong to the same orbit of $G_a$.
\end{proof}

\begin{lemma}\label{Lm:Action}
Let $Q$ be a commutative Moufang loop, let $A=\aut(Q)$, and let $\alpha$,~$\beta\in J\CA(Q)$. Then $C_A(\alpha)\cap C_A(\beta)$ acts naturally on $Z(Q)/\img(id-(\alpha+\beta))$.
\end{lemma}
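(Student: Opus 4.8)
Write $\delta=id-(\alpha+\beta)$, so that $\delta(x)=x-(\alpha(x)+\beta(x))$. The plan is to show first that $\img(\delta)$ is a subgroup of $Z(Q)$ — so that $Z(Q)/\img(\delta)$ is a well-defined abelian group — and then to produce the action. Since $Q$ is a commutative Moufang loop it is commutative and diassociative, hence has the automorphic inverse property; and since $\alpha$, $\beta\in J\CA(Q)$ are $J$-central automorphisms, Lemma \ref{Lm:alphahat} gives that $\hat\alpha$ and $\hat\beta$ are endomorphisms of $Q$ into $Z(Q)$. Substituting $\alpha(x)=-x+\hat\alpha(x)$ and $\beta(x)=-x+\hat\beta(x)$ and using centrality of $\hat\alpha(x)$, $\hat\beta(x)$ together with the automorphic inverse property, a short computation of the kind already carried out in the proofs of Lemma \ref{Lm:ortho} and Corollary \ref{Cr:Mend} yields
\[
\delta(x)=3x-\bigl(\hat\alpha(x)+\hat\beta(x)\bigr).
\]
As $3x\in Z(Q)$ in a commutative Moufang loop, $\delta$ maps $Q$ into $Z(Q)$. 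Moreover the cube map $x\mapsto 3x$ is an endomorphism of $Q$ (by diassociativity, $3(x+y)=3x+3y$ already holds inside the abelian group $\langle x,y\rangle$), and $\hat\alpha+\hat\beta$ is an endomorphism of $Q$ into the abelian group $Z(Q)$; hence their difference $\delta$ is an endomorphism of $Q$ into $Z(Q)$. Therefore $\img(\delta)$ is a subloop of $Z(Q)$, hence a subgroup, and $Z(Q)/\img(\delta)$ is an abelian group.

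Next I would verify that every $\xi\in C_A(\alpha)\cap C_A(\beta)$ preserves both $Z(Q)$ and $\img(\delta)$. The first holds because the center is a characteristic subloop. For the second, if $\xi\in A$ commutes with $\alpha$ and with $\beta$, then for every $x\in Q$
\[
\xi\bigl((\alpha+\beta)(x)\bigr)=\xi(\alpha(x))+\xi(\beta(x))=\alpha(\xi(x))+\beta(\xi(x))=(\alpha+\beta)(\xi(x)),
\]
and together with the automorphic inverse property this gives $\xi\delta=\delta\xi$; hence $\xi(\img(\delta))=\delta(\xi(Q))=\delta(Q)=\img(\delta)$. Thus $\xi$ restricts to an automorphism of $Z(Q)$ that stabilizes the subgroup $\img(\delta)$, and so induces an automorphism $\bar\xi$ of $Z(Q)/\img(\delta)$ by $\bar\xi(z+\img(\delta))=\xi(z)+\img(\delta)$.

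Finally, the map $\xi\mapsto\bar\xi$ is a group homomorphism $C_A(\alpha)\cap C_A(\beta)\to\mathrm{Aut}\bigl(Z(Q)/\img(\delta)\bigr)$: it is well defined by the previous step, sends $id$ to the identity, and satisfies $\overline{\xi_1\xi_2}=\bar\xi_1\bar\xi_2$ since restriction to $Z(Q)$ and passage to the quotient both respect composition. This is the asserted natural action. The one step requiring genuine care is the first paragraph — establishing that $\img(id-(\alpha+\beta))$ really is a subgroup of $Z(Q)$ — and the key inputs there are the identity $\delta(x)=3x-\hat\alpha(x)-\hat\beta(x)$, the centrality of cubes in a commutative Moufang loop, and the fact that the cube map is an endomorphism; the rest is routine.
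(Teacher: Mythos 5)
Your proof is correct and follows essentially the same route as the paper: the key identity $(id-(\alpha+\beta))(x)=3x-(\hat\alpha(x)+\hat\beta(x))\in Z(Q)$, and commutation of $\xi$ with $\alpha$ and $\beta$ to make the induced map on cosets well defined. The only (welcome) addition is your explicit check that $\img(id-(\alpha+\beta))$ is a subgroup of $Z(Q)$ because $id-(\alpha+\beta)$ is an endomorphism, a point the paper leaves implicit in writing $I\leq Z(Q)$.
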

\begin{proof}
Let $I=\img(id-(\alpha+\beta))$. First, we note that $I\leq Z(Q)$. Indeed, for every $x\in Q$, we have
\begin{displaymath}
    x-(\alpha(x)+\beta(x)) = x-((-x+\hat\alpha(x))+(-x+\hat\beta(x))) = 3x-(\hat\alpha(x)+\hat\beta(x))\in Z(Q),
\end{displaymath}
because $3x\in Z(Q)$ and $\alpha$, $\beta$ are $J$-central.

It remains to show that for every $\gamma\in C_A(\alpha)\cap C_A(\beta)$ the mapping $u+I\mapsto \gamma(u)+I$ is well-defined. Suppose that $u+I=v+I$ for some $u$, $v\in Z(Q)$. Then $u=v+(x-(\alpha(x)+\beta(x)))$ for some $x\in Q$, and we have
\begin{align*}
    \gamma(u) &= \gamma(v) + (\gamma(x) - (\gamma\alpha(x)+\gamma\beta(x)))\\ &= \gamma(v) + (\gamma(x) - (\alpha\gamma(x) + \beta\gamma(x)))\\
    &= \gamma(v) + (id-(\alpha+\beta))(\gamma(x)) \in \gamma(v) + I,
\end{align*}
finishing the proof.
\end{proof}

We can now reformulate Theorem \ref{Th:Kepka-iso} so that it can be used directly in the enumeration of centrally affine quasigroups over a given commutative Moufang loop. (A similar theorem for abelian groups was obtained by Dr\'apal \cite[Theorem 3.2]{Dra} and used as an enumeration tool in \cite{SV}.)

\begin{theorem}\label{Th:Alg}
Let $Q$ be a commutative Moufang loop and let $A=\aut(Q)$. The isomorphism classes of centrally affine quasigroups over $Q$ (resp. trimedial quasigroups over $Q$) are in one-to-one correspondence with the elements of the set
\begin{displaymath}
    \{(\varphi,\psi,c):\varphi\in X,\,\psi\in Y_\varphi,\,c\in Z_{\varphi,\psi}\},
\end{displaymath}
where
\begin{itemize}
	\item $X$ is a complete set of orbit representatives of the conjugation action of $A$ on $J\CA(Q)$;
	\item $Y_\varphi$ is a complete set of orbit representatives of the conjugation action of $C_A(\varphi)$ on $J\CA(Q)$ (resp. on $J\CA(Q)\cap C_A(\varphi)$), for every $\varphi\in X$;
	\item $Z_{\varphi,\psi}$ is a complete set of orbit representatives of the natural action of $C_A(\varphi)\cap C_A(\psi)$ on $Z(Q)/\img(id-(\varphi+\psi))$.
\end{itemize}
\end{theorem}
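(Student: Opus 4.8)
The plan is to turn the classification into an orbit problem via Theorem~\ref{Th:Kepka-iso}, and then to evaluate the orbits by the iterated-stabilizer technique of Lemma~\ref{Lm:Orbits}. First I would parametrize. Since $Q$ is a commutative diassociative loop it has the automorphic inverse property, so by Corollary~\ref{Cr:JCentral} the requirement that $-\varphi$ be a central mapping is equivalent to $\varphi\in J\CA(Q)$. Hence the centrally affine quasigroups over $Q$ are precisely the $\mathcal Q(Q,+,\varphi,\psi,c)$ with $(\varphi,\psi,c)\in J\CA(Q)\times J\CA(Q)\times Z(Q)$, and by Theorem~\ref{Th:Kepka} the trimedial quasigroups over $Q$ are exactly those $\mathcal Q(Q,+,\varphi,\psi,c)$ for which in addition $\varphi\psi=\psi\varphi$. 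Write $A=\aut(Q)$, and for $(\varphi,\psi)\in J\CA(Q)^2$ put $I_{\varphi,\psi}=\img(id-(\varphi+\psi))$. The computation at the start of the proof of Lemma~\ref{Lm:Action} shows $I_{\varphi,\psi}\le Z(Q)$, and if $f\in A$ conjugates $(\varphi_1,\psi_1)$ to $(\varphi_2,\psi_2)$ then $f(I_{\varphi_1,\psi_1})=I_{\varphi_2,\psi_2}$, because $f$ is bijective and $f\circ(id-(\varphi_1+\psi_1))=(id-(\varphi_2+\psi_2))\circ f$.

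Next I would rewrite Theorem~\ref{Th:Kepka-iso} for the case $Q_1=Q_2=Q$. Using $c_2=f(c_1+u)=f(c_1)+f(u)$ and $f(u)\in f(I_{\varphi_1,\psi_1})=I_{\varphi_2,\psi_2}$, the theorem becomes: $\mathcal Q(Q,+,\varphi_1,\psi_1,c_1)$ and $\mathcal Q(Q,+,\varphi_2,\psi_2,c_2)$ are isomorphic if and only if there is $f\in A$ with $\varphi_2=f\varphi_1 f^{-1}$, $\psi_2=f\psi_1 f^{-1}$ and $c_2+I_{\varphi_2,\psi_2}=f(c_1)+I_{\varphi_2,\psi_2}$. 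This suggests introducing the set
\[
\widetilde T=\bigl\{\,(\varphi,\psi,\,c+I_{\varphi,\psi})\ :\ (\varphi,\psi)\in J\CA(Q)^2,\ c\in Z(Q)\,\bigr\},
\]
which carries the well-defined $A$-action $f\cdot(\varphi,\psi,c+I_{\varphi,\psi})=(f\varphi f^{-1},\,f\psi f^{-1},\,f(c)+I_{f\varphi f^{-1},f\psi f^{-1}})$, well-definedness being exactly $f(I_{\varphi,\psi})=I_{f\varphi f^{-1},f\psi f^{-1}}$. Sending $(\varphi,\psi,c+I_{\varphi,\psi})$ to $\mathcal Q(Q,+,\varphi,\psi,c)$ then gives a bijection between the $A$-orbits on $\widetilde T$ and the isomorphism classes of centrally affine quasigroups over $Q$; for the trimedial count one restricts everything to the $A$-invariant subset where $\varphi\psi=\psi\varphi$ (a conjugate of a commuting pair is again a commuting pair).

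It then remains to compute the $A$-orbits on $\widetilde T$, which I would do in three stages using that $J\CA(Q)$ is closed under conjugation (Lemma~\ref{Lm:conj}). Applying Lemma~\ref{Lm:Orbits} to the $A$-set $J\CA(Q)$, a set of orbit representatives for $A$ acting on $J\CA(Q)^2$ is $\{(\varphi,\psi):\varphi\in X,\ \psi\in Y_\varphi\}$, where $X$ represents the $A$-orbits on $J\CA(Q)$ and $Y_\varphi$ the orbits of $\mathrm{Stab}_A(\varphi)=C_A(\varphi)$ on $J\CA(Q)$; the same argument applied to the invariant subset $\{(\varphi,\psi):\varphi\psi=\psi\varphi\}$ leaves $X$ unchanged (take $\psi=\varphi$) and replaces $J\CA(Q)$ by $J\CA(Q)\cap C_A(\varphi)$ in the definition of $Y_\varphi$. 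For such a pair $(\varphi,\psi)$ the $A$-stabilizer is $C_A(\varphi)\cap C_A(\psi)$, and the elementary fact that, under a $G$-equivariant surjection, the $G$-orbits lying over one $G$-orbit of the target biject with the orbits of a point-stabilizer on the corresponding fiber --- applied to the projection $\widetilde T\to J\CA(Q)^2$ --- identifies the $A$-orbits over the orbit of $(\varphi,\psi)$ with the orbits of $C_A(\varphi)\cap C_A(\psi)$ on the fiber $Z(Q)/I_{\varphi,\psi}=Z(Q)/\img(id-(\varphi+\psi))$, which by Lemma~\ref{Lm:Action} carries exactly the natural action. Choosing $Z_{\varphi,\psi}$ to represent those orbits and concatenating the three stages produces the claimed bijection.

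The only input that genuinely uses the commutative Moufang structure, rather than pure formalism, is the inclusion $\img(id-(\varphi+\psi))\le Z(Q)$, and this is already established inside Lemma~\ref{Lm:Action} (it rests on $3x\in Z(Q)$ and on the $J$-centrality of $\varphi,\psi$). I expect the point to watch is the bookkeeping of the iterated stabilizers over the three coordinates: in particular, verifying that $X$ really is the same set in the centrally affine and the trimedial cases, and that quotienting the last coordinate by the varying subgroup $\img(id-(\varphi+\psi))$ (the passage to $\widetilde T$) neither merges non-isomorphic quasigroups nor splits isomorphic ones.
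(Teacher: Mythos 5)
Your proposal is correct and follows essentially the same route as the paper's (much terser) proof: reduce to the equivalence on $J\CA(Q)\times J\CA(Q)\times Z(Q)$ induced by Theorem~\ref{Th:Kepka-iso}, handle the pair $(\varphi,\psi)$ by the iterated-stabilizer Lemma~\ref{Lm:Orbits}, and handle the constant via the action of $C_A(\varphi)\cap C_A(\psi)$ on $Z(Q)/\img(id-(\varphi+\psi))$ from Lemma~\ref{Lm:Action}. The extra details you supply --- the equivariance $f(I_{\varphi_1,\psi_1})=I_{\varphi_2,\psi_2}$, the well-definedness of the action on cosets, and the check that $X$ is unchanged in the trimedial case --- are exactly the points the paper leaves implicit, and they are handled correctly.
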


\begin{proof}
Consider the equivalence relation of $J\CA(Q)\times J\CA(Q)\times Z(Q)$ implicitly defined by Theorem \ref{Th:Kepka-iso}. By Lemma \ref{Lm:Orbits}, it remains to describe when two triples $(\varphi,\psi,c_1)$ and $(\varphi,\psi,c_2)$ are equivalent, where $\varphi\in X$, $\psi\in Y_\varphi$ and $c_1$, $c_2\in Z(Q)$.

Let $I=\img(id-(\varphi+\psi))$. Using Lemma \ref{Lm:Action}, for any $\gamma\in\aut(Q)$ we have $c_2 = \gamma(c_1+u)$ for some $u\in I$ if and only if $c_2\in \gamma(c_1+I) = \gamma(c_1)+I$, which is equivalent to $c_2+I = \gamma(c_1)+I = \gamma(c_1+I)$.
\end{proof}

\section{Results}\label{Sc:Results}

\subsection{Detailed results for order $243$}

The sizes of the various sets of automorphisms encountered during the enumeration can be found in Table \ref{t:aut}. Here $X/G$ denotes the number of orbits of the action of a group $G$ on a set $X$ (where the action is as described above). The loop notation $n/k$ refers to \texttt{MoufangLoop(n,k)}.

\begin{table}[ht]
\begin{displaymath}
\begin{array}{|r|rrrrrr|}
    \hline
    Q                                                                   &   243/1       &   243/2   &   243/5   &   243/56      &   243/57      &   243/67\\
    \text{exponent of }Q                                                &   9           &   27      &   9       &   3           &   9           &   9\\
    Z(Q)       	                                            						&   C_3^2     &   C_9  &   C_3^2  &   C_3^2  &  C_3^2   &   C_9\\
    \text{size of $A = \aut(Q)$}                                                   &   629856     &   34992  &   78732  &   49128768  &   1889568   &   909792\\
    \hline
    |\CA(Q)|=|J\CA(Q)|                                                  &   729         &   81      &   729     &   4374       &   4374       &   81\\
    |J\CA(Q)/A|                          &   16          &   12      &   38      &   8           &   18          &   6\\
    |J\CA(Q)^2/A|    &   1827       &   207     &   11061  &   283         &   2146       &   54\\
    |(J\CA(Q)^2\times Z(Q))/A| &   2310       &   288     &   13056  &   375         &   2537       &   114\\
		\hline
    |\CO{Q}|                                                  &   729         &   81      &   729     &   2187       &   2187       &   81\\
    |(\CO{Q})/A|                   &   16          &   12      &   38      &   6           &   14          &   6\\
    \hline
\end{array}
\end{displaymath}
\caption{Sizes of various subsets of automorphisms that appear in the classification.}
\label{t:aut}
\end{table}

\subsection{Enumeration}

Let $c(Q)$ denote the number of centrally affine quasigroups, $t(Q)$ the number of trimedial quasigroups, $d(Q)$ the number of distributive quasigroups, $dM(Q)$ the number of distributive Mendelsohn quasigroups, and $dS(Q)$ the number of distributive Steiner quasigroups over a loop $Q$, up to isomorphism.

Table \ref{t:main1} displays these numbers for every nonassociative commutative Moufang loop of order $81$ and $243$. The entries for order 81 can be found already in \cite{DGMOS,KBL,KN} and have been independently verified by our calculations. The entries in the last row can be found in \cite{Ben-dq} and have also been independently verified. The remaining entries for order 243 are new. Since all the commutative Moufang loops in the table are nonassociative, the corresponding quasigroups are non-medial by Lemma \ref{Lm:AffineMedial}.

\begin{table}[ht]
\[
\begin{array}{|r|rr|rrrrrr|}   \hline
Q			  & 81/1 & 81/2 & 243/1 & 243/2 & 243/5 & 243/56 & 243/57 & 243/67 \\\hline
c(Q)   & 8 &  27 &  2310 & 288 &  13056 &  375 & 2537 &   114    \\
t(Q)   & 8 &  27 &  2310 & 288 &  13056 &  165 & 1071 &   114    \\
d(Q)   & 2 &   4 &  16 &  12 &  38 & 6& 14& 6   \\
dM(Q)  & 2 &   0 &   0 &   0 &   0 & 5&  1& 0 \\
dS(Q)  & 1 &   0 &   0 &   0 &   0 & 1&  0& 0 \\\hline
\end{array}
\]
\caption{Enumeration of various classes of centrally affine quasigroups over a given commutative Moufang loop.}
\label{t:main1}
\end{table}

Note that the entries $c(Q)$ for loops of order $243$ in Table \ref{t:main1} are precisely the entries in the $8$th row of Table \ref{t:aut}, as explained by Theorem \ref{Th:Alg}.

In Table \ref{t:main2} we summarize the results of Table \ref{t:main1} by order, and we use a notation analogous to that of Table \ref{t:main1}. For instance, $t(n)$ denotes the number of \emph{non-medial} trimedial quasigroups of order $n$ up to isomorphism. Note that we have \emph{not} enumerated non-medial centrally affine quasigroups of order $243$, since this would require also the enumeration of all quasigroups $\mathcal Q(Q,+,\varphi,\psi,c)$, where $(Q,+)$ is an abelian group of order $243$ and $\varphi$, $\psi$ are \emph{non-commuting} automorphisms of $(Q,+)$; a difficult task (see \cite{SV}).

\begin{table}[ht]
\[
\begin{array}{|r|rrrr|} \hline
n       & 3^3 & 3^4 & 3^5 & 3^6 \\\hline
t(n)   &    0 &  35 & {17004}    &?     \\
d(n)   &    0 &   6 & {92} &?     \\
dM(n)  &    0 &   2 & {6} &?     \\
dS(n)  &    0 &   1 &   1 & 3   \\\hline
\end{array}
\]
\caption{Enumeration of various classes of non-medial quasigroups for a given order.}
\label{t:main2}
\end{table}

\subsection{Explicit constructions}

Detailed results of the enumeration, including arithmetic forms for all the quasigroups, can be obtained from the third author upon request.

To present a sample of the detailed results, we now give explicit formulas for all elements of $\CO{Q}$ up to conjugacy in $\aut(Q)$, where $Q$ is \texttt{MoufangLoop(243,$i$)} with $i=56$ or $i=57$ (these are the two directly decomposable non-associative commutative Moufang loops of order 243). The corresponding distributive quasigroups can be obtained readily using Proposition \ref{Cr:BelousovSoublin}. In particular, we obtain an explicit description of all non-affine distributive Mendelsohn triple systems of order 243.

\begin{example}
Consider $Q=\texttt{MoufangLoop(243,56)}=\texttt{MoufangLoop(81,1)}\times\Z_3$.
According to \cite{KN}, the loop $\texttt{MoufangLoop(81,1)}$ is isomorphic to $(\Z_3^4,+)$, where
\begin{displaymath}
    (a_1,b_1,c_1,d_1)+(a_2,b_2,c_2,d_2) = (a_1+a_2+(d_1-d_2)(b_1c_2-c_1b_2),b_1+b_2,c_1+c_2,d_1+d_2).
\end{displaymath}
The associator subloop is $A(Q)=\Z_3\times 0\times 0\times 0\times 0$ and the center is $Z(Q)=\Z_3\times 0\times 0\times 0\times \Z_3$.

The elements of $\CO{Q}$ up to conjugacy by $\aut(Q)$ are given by the following six endomorphisms into the center:
\begin{align*}
  \hat\psi_1: (a,b,c,d,e)\mapsto (0,0,0,0,0),\qquad & \hat\psi_2: (a,b,c,d,e)\mapsto (b,0,0,0,0),\\
  \hat\psi_3: (a,b,c,d,e)\mapsto (e,0,0,0,0),\qquad & \hat\psi_4: (a,b,c,d,e)\mapsto (0,0,0,0,b),\\
  \hat\psi_5: (a,b,c,d,e)\mapsto (b,0,0,0,c),\qquad & \hat\psi_6: (a,b,c,d,e)\mapsto (e,0,0,0,b).
\end{align*}
It is straightforward to check that each of these mappings is an endomorphism into the center with a unique fixed point, and that all $id-\psi_i = 2id-\hat\psi_i$ are permutations. By Lemma \ref{Lm:alphahat}, $\psi_i\in\CO{Q}$ for every $i$.

To check that the six mappings are pairwise non-conjugate, we use the following criterion: \emph{Let $\alpha\in\mathrm{End}(Q)$ and $\xi\in\aut(Q)$. If $H$ is a characteristic subloop of $Q$, we have $\alpha^\xi(H)=\xi\alpha(H)$. If both $H$ and $\alpha(H)$ are characteristic subloops of $Q$ then $\alpha(H)=\alpha^\xi(H)$.} Now observe that:
\begin{itemize}
 \item $\im(\hat\psi_1)=0$,
 \item $\im(\hat\psi_2)=A(Q)$ and $\hat\psi_2(Z(Q))=0$,
 \item $\im(\hat\psi_3)=A(Q)$ and $\hat\psi_3(Z(Q))\neq0$,
 \item $\im(\hat\psi_4)$ is neither $A(Q)$, nor $Z(Q)$,
 \item $\im(\hat\psi_5)=Z(Q)$ and $\hat\psi_5(Z(Q))=0$,
 \item $\im(\hat\psi_6)=Z(Q)$ and $\hat\psi_6(Z(Q))\neq0$.
\end{itemize}
\end{example}

\begin{example}
Consider $Q=\texttt{MoufangLoop(243,57)}=\texttt{MoufangLoop(81,2)}\times\Z_3$.
According to \cite{KN}, the loop $\texttt{MoufangLoop(81,2)}$ is isomorphic to $(\Z_3^2\times\Z_9,+)$, where
\begin{displaymath}
    (a_1,b_1,c_1)+(a_2,b_2,c_2) = (a_1+a_2,b_1+b_2,c_1+c_2+3(c_1-c_2)(a_1b_2-b_1a_2)).
\end{displaymath}
The associator subloop is
$A(Q)=0\times 0\times 3\Z_9\times 0$ and the center is $Z(Q)=0\times 0\times 3\Z_9\times\Z_3$.

The elements of $\CO{Q}$ up to conjugacy by $\aut(Q)$ are given by the following endomorphisms into the center:
\begin{align*}
  \hat\psi_1: (a,b,c,d)\mapsto (0,0,0,0),\qquad &
  \hat\psi_2: (a,b,c,d)\mapsto (0,0,3c,0),\\
  \hat\psi_3: (a,b,c,d)\mapsto (0,0,6c,0),\qquad &
  \hat\psi_4: (a,b,c,d)\mapsto (0,0,3d,0),\\
  \hat\psi_5: (a,b,c,d)\mapsto (0,0,3a,0),\qquad &
  \hat\psi_6: (a,b,c,d)\mapsto (0,0,0,a),\\
  \hat\psi_7: (a,b,c,d)\mapsto (0,0,0,c \bmod 3),\qquad &
  \hat\psi_8: (a,b,c,d)\mapsto (0,0,3a,b),\\
  \hat\psi_9: (a,b,c,d)\mapsto (0,0,3c,a),\qquad &
  \hat\psi_{10}: (a,b,c,d)\mapsto (0,0,6c,a),\\
  \hat\psi_{11}: (a,b,c,d)\mapsto (0,0,3a,c \bmod 3),\qquad &
  \hat\psi_{12}: (a,b,c,d)\mapsto (0,0,3d,a),\\
  \hat\psi_{13}: (a,b,c,d)\mapsto (0,0,3d,c \bmod 3),\qquad &
  \hat\psi_{14}: (a,b,c,d)\mapsto (0,0,3d,2c \bmod 3).
\end{align*}
Again, it is straightforward to check that the corresponding mappings $\psi_i$ belong to $\CO{Q}$. To show that they are pairwise non-conjugate, first notice that $\hat\psi_1=0$, $\hat\psi_2=3id$ and $\hat\psi_3=6id$, so they commute with any automorphism. To distinguish the remaining mappings, consider also the characteristic subloop
$B=\{x\in Q:\,x^3=1\}=\Z_3\times\Z_3\times 3\Z_9\times \Z_3$ and observe that
\begin{itemize}
  \item $\im(\hat\psi_i)=A(Q)$ iff $i=4,5$; here $\hat\psi_5(Z(Q))=0$ but $\hat\psi_4(Z(Q))\neq 0$;
  \item $\im(\hat\psi_i)$ is of order 3 but not $A(Q)$ iff $i=6,7$; here $\hat\psi_7(B)=0$ but $\hat\psi_6(B)\neq 0$,
  \item $\im(\hat\psi_i)=Z(Q)$ for $i=8,\dots,14$;
\begin{itemize}
  \item $\hat\psi_i(Z(Q))=0$ for $i=8,9,10,11$, but
\begin{itemize}
	\item $\hat\psi_{8}(B)=Z(Q)$,
	\item $\hat\psi_{11}(B)=A(Q)$,
	\item both $\hat\psi_{9}(B),\hat\psi_{10}(B)$ have order 3, $\neq A(Q)$;
	we have $\hat\psi_{9}=\hat\psi_{2}+\hat\psi_{6}$  and if there existed
	$\xi$ such that $\hat\psi_{9}^\xi=\hat\psi_{10}$ then $\hat\psi_{6}^\xi=\hat\psi_{10}-\hat\psi_{2}=\hat\psi_{9}$
	which is impossible;
\end{itemize}
  \item $\hat\psi_i(Z(Q))=A(Q)$ for $i=12,13,14$, but
\begin{itemize}
	\item $\hat\psi_{12}(B)=Z(Q)$,
	\item $\hat\psi_{13}(B)=\hat\psi_{14}(B)=A(Q)$; they cannot be conjugate, because their squares, $\hat\psi_{13}^2=\hat\psi_2$ and $\hat\psi_{14}^2=\hat\psi_3$, are not.
\end{itemize}
\end{itemize}
\end{itemize}
\end{example}

Which of these quasigroups transform into distributive Mendelsohn triple systems? According to Corollary \ref{Cr:Mend}:
\begin{itemize}
\item for $Q=\texttt{MoufangLoop(243,56)}$ whose exponent is 3, these are precisely the mappings $\hat\psi_i$ with $\hat\psi_i^2=0$, which is the case for $i=1,2,3,4,5$.
\item for $Q=\texttt{MoufangLoop(243,57)}$, since $3Z(Q)=0$, the equation is equivalent to ${\hat\psi_i}^2=-3id$, which is satisfied only for $i=14$.
\end{itemize}
Using Proposition \ref{Cr:BelousovSoublin}, the triple system $(V,B)$ corresponding to the pair $(Q,\hat\psi)$ is defined by
\begin{displaymath}
     V=Q \quad\text{and}\quad B=\{(x,\,y,\,2x-y+\hat\psi(y-x)):\ x,\,y\in Q\}.
\end{displaymath}

\subsection{Commuting central automorphisms}

Upon inspection of Table \ref{t:main1}, we see that in many small nonassociative commutative Moufang loops $Q$, any two $J$-central automorphisms of $Q$ commute. This is partly explained by Proposition \ref{Pr:affine_are_trimedial}.

\begin{lemma}\label{Lm:AuxCommute}
Let $Q$ be a commutative Moufang loop and let $\varphi$, $\psi$ be $J$-central automorphisms of $Q$. Then $\varphi\psi=\psi\varphi$ if and only if $\hat\varphi\hat\psi = \hat\psi\hat\varphi$.
\end{lemma}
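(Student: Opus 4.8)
The plan is to translate everything into statements about the ``hat'' mappings using the relation $\hat\alpha = id + \alpha$ (equivalently $\alpha = -id + \hat\alpha$, valid since $Q$ has two-sided inverses and, being a commutative Moufang loop, the automorphic inverse property), and then exploit that by Lemma \ref{Lm:JCentral} the images $\hat\varphi(x)$, $\hat\psi(x)$ all lie in the center $Z(Q)$, where everything associates and commutes. The key observation is that composing $\varphi$ with $\psi$ introduces only ``correction terms'' that land in $Z(Q)$, so the commutator of $\varphi$ and $\psi$ should be expressible entirely in terms of central elements built from $\hat\varphi$ and $\hat\psi$.

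First I would compute $\psi\varphi(x)$ explicitly. Write $\varphi(x) = -x + \hat\varphi(x)$; then $\psi\varphi(x) = -\varphi(x) + \hat\psi(\varphi(x)) = -(-x+\hat\varphi(x)) + \hat\psi(-x+\hat\varphi(x))$. Using the automorphic inverse property this is $x - \hat\varphi(x) + \hat\psi(-x+\hat\varphi(x))$. Now $\hat\psi$ need not be additive on all of $Q$ in general, but by Lemma \ref{Lm:alphahat} (since $\psi$ is a $J$-central \emph{endomorphism}, indeed automorphism) $\hat\psi$ \emph{is} an endomorphism into $Z(Q)$; hence $\hat\psi(-x+\hat\varphi(x)) = -\hat\psi(x) + \hat\psi\hat\varphi(x)$. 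Since all the terms $\hat\varphi(x)$, $\hat\psi(x)$, $\hat\psi\hat\varphi(x)$ lie in $Z(Q)$, I can rearrange freely to get
\[
    \psi\varphi(x) = x - \hat\varphi(x) - \hat\psi(x) + \hat\psi\hat\varphi(x).
\]
By the symmetric computation,
\[
    \varphi\psi(x) = x - \hat\psi(x) - \hat\varphi(x) + \hat\varphi\hat\psi(x).
\]
Subtracting (again legitimate since we are comparing elements of $Q$ and the differing terms are central), $\varphi\psi = \psi\varphi$ holds if and only if $\hat\varphi\hat\psi(x) = \hat\psi\hat\varphi(x)$ for every $x$, which is exactly the claim.

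The only genuine obstacle is bookkeeping: one must be careful that the Moufang/diassociativity and automorphic-inverse properties really do justify each rearrangement, particularly the step $\hat\psi(-x + \hat\varphi(x)) = -\hat\psi(x) + \hat\psi\hat\varphi(x)$ (this uses that $\hat\psi$ is a genuine endomorphism, not just a central map, via Lemma \ref{Lm:alphahat}) and the cancellations of the form $x - \hat\varphi(x) + (\text{central}) = x + (\text{central}) - \hat\varphi(x)$ (these use that the added terms are central, so they associate and commute with everything). Once the two displayed formulas for $\varphi\psi(x)$ and $\psi\varphi(x)$ are in hand, the equivalence is immediate because the ``$x - \hat\varphi(x) - \hat\psi(x)$'' part is symmetric in $\varphi,\psi$ and central-valued, so the two composites agree precisely when the remaining central terms $\hat\varphi\hat\psi(x)$ and $\hat\psi\hat\varphi(x)$ agree.
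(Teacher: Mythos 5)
Your proof is correct and is essentially the paper's argument run in the opposite direction: the paper expands $\hat\varphi\hat\psi=\hat\psi+\varphi+\varphi\psi$ and $\hat\psi\hat\varphi=\hat\varphi+\psi+\psi\varphi$ and cancels the symmetric part using $\hat\psi+\varphi=\hat\varphi+\psi$, while you expand $\varphi\psi$ and $\psi\varphi$ pointwise in terms of $\hat\varphi,\hat\psi$ via Lemma \ref{Lm:alphahat} and cancel the symmetric central terms. Both rest on the same ingredients (central-valued hat maps, the automorphic inverse property, and loop cancellation), so there is no gap and nothing essentially new in the route.
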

\begin{proof}
We must proceed carefully since the addition of mappings on $Q$ is not necessarily an associative operation. However, for any $\alpha\in\CA(Q)$ and $\beta$, $\gamma\in\aut(Q)$ we have $\hat\alpha+(\beta+\gamma) = (\hat\alpha+\beta) + \gamma$ because $\mathrm{Im}(\hat\alpha)\subseteq Z(Q)$ by Lemma \ref{Lm:JCentral}. In particular, we have
\begin{equation}\label{Eq:Aux}
    \hat\psi+\varphi = \hat\psi+\hat\varphi-id = \hat\varphi + \hat\psi-id = \hat\varphi+\psi.
\end{equation}
Now, $\hat\varphi\hat\psi = (id+\varphi)\hat\psi = \hat\psi + \varphi\hat\psi = \hat\psi + \varphi + \varphi\psi$ and, by symmetry, $\hat\psi\hat\varphi = \hat\varphi+\psi+\psi\varphi$. Thanks to \eqref{Eq:Aux}, we see that $\varphi$ and $\psi$ commute if and only if $\hat\varphi$ and $\hat\psi$ commute.
\end{proof}

\begin{proposition}\label{Pr:affine_are_trimedial}
Let $Q$ be a nonassociative commutative Moufang loop of order a power of~$3$ such that $Z(Q)$ is cyclic and $Q/Z(Q)$ is associative. Then any two $J$-central automorphisms of $Q$ commute.
\end{proposition}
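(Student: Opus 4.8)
The plan is to reduce the statement to a short computation inside $Z(Q)$, using the two preparatory lemmas. By Lemma~\ref{Lm:AuxCommute} it is enough to prove that $\hat\varphi\hat\psi=\hat\psi\hat\varphi$ for any $J$-central automorphisms $\varphi$, $\psi$ of $Q$. A commutative Moufang loop is commutative and (by Moufang's theorem) diassociative, so it has the automorphic inverse property, and Lemma~\ref{Lm:alphahat} applies: $\hat\varphi$ and $\hat\psi$ are endomorphisms of $Q$ whose images lie in $Z(Q)$. The crucial first observation is that $\hat\varphi$ and $\hat\psi$ vanish on the associator subloop $A(Q)$, since a loop homomorphism from $Q$ into the associative loop $Z(Q)$ sends every associator of $Q$ to $0$, so the normal closure $A(Q)$ of these associators lies in the kernel of $\hat\varphi$ and of $\hat\psi$.

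Next I would restrict attention to the center. Since $\hat\varphi(Q)\subseteq Z(Q)$, the map $\hat\varphi|_{Z(Q)}$ is an endomorphism of the cyclic group $(Z(Q),+)$, hence is multiplication by some integer $a$; likewise $\hat\psi|_{Z(Q)}$ is multiplication by some integer $b$. Here the hypotheses enter. The assumption that $Q/Z(Q)$ is associative gives $A(Q)\subseteq Z(Q)$, and $Q$ being nonassociative gives $A(Q)\neq 0$; thus $A(Q)$ is a nontrivial subgroup of the cyclic $3$-group $Z(Q)$ and therefore contains the subgroup of order $3$. As multiplication by $a$ and by $b$ annihilate $A(Q)$, they annihilate this order-$3$ subgroup, which forces $3\mid a$ and $3\mid b$; write $a=3a'$ and $b=3b'$.

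To finish, I would compute both composites on an arbitrary $x\in Q$. Since $\hat\psi(x)\in Z(Q)$, one has $\hat\varphi\hat\psi(x)=a\,\hat\psi(x)=a'\bigl(3\hat\psi(x)\bigr)=a'\hat\psi(3x)$, and since $3x\in Z(Q)$ in a commutative Moufang loop, $\hat\psi(3x)=b\cdot(3x)$, so that $\hat\varphi\hat\psi(x)=(a'b)\cdot(3x)$. The symmetric computation gives $\hat\psi\hat\varphi(x)=(ab')\cdot(3x)$, and since $a'b=3a'b'=ab'$ the two expressions coincide. Hence $\hat\varphi\hat\psi=\hat\psi\hat\varphi$, and Lemma~\ref{Lm:AuxCommute} yields $\varphi\psi=\psi\varphi$.

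I do not expect a genuine obstacle; the only point that needs care is the bookkeeping of integer multiples in a loop that is a priori nonassociative. This is harmless here because every element occurring in the final computation ($3x$, $\hat\varphi(x)$, $\hat\psi(x)$, and their integer multiples) lies in $Z(Q)$, so all the manipulations take place in the cyclic group $Z(Q)$.
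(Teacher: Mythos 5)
Your proposal is correct and follows essentially the same route as the paper's proof: reduce to $\hat\varphi\hat\psi=\hat\psi\hat\varphi$ via Lemma~\ref{Lm:AuxCommute}, use Lemma~\ref{Lm:alphahat} to view $\hat\varphi,\hat\psi$ as endomorphisms into the cyclic center that kill $A(Q)$, deduce $3\mid a,b$ from $0<A(Q)\le Z(Q)$, and conclude by a computation entirely inside $Z(Q)$. The only cosmetic difference is that you route the final calculation through multiples of $3x$, whereas the paper uses $\hat\varphi\hat\psi(x)=a\hat\psi(x)=\hat\psi(ax)=abx$ directly; both are the same argument.
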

\begin{proof}
Let $\varphi$, $\psi$ be $J$-central automorphisms of $Q$. By Lemma \ref{Lm:AuxCommute}, it suffices to show that $\hat\varphi\hat\psi=\hat\psi\hat\varphi$.

By Lemma \ref{Lm:alphahat}, $\hat\varphi$ and $\hat\psi$ are endomorphism into $Z(Q)$. Any endomorphism into $Z(Q)$ has all associators $(x+(y+z))-((x+y)+z)$ in its kernel, and thus vanishes on the associator subloop $A(Q)$. Since $Z(
Q)$ is cyclic, there are integers $a$, $b$ such that $\hat\varphi(z)=az$, $\hat\psi(z)=bz$ for every $z\in Z(Q)$.

By our assumption, $Q/Z(Q)$ is associative and $0<A(Q)$. Thus $0<A(Q)\le Z(Q)$ and the restriction of each of $\hat\varphi$, $\hat\psi$ onto $Z(Q)$ has nontrivial kernel. Since $|Z(Q)|$ is a power of $3$, it follows that $3$ divides $a$ and $b$. Then $ax$, $bx\in Z(Q)$ for every $x\in Q$, and we calculate
\begin{displaymath}
    \hat\varphi\hat\psi(x) = a\hat\psi(x) = \hat\psi(ax) = bax = abx = \hat\varphi(bx) = b\hat\varphi(x) = \hat\psi\hat\varphi(x)
\end{displaymath}
for every $x\in Q$.
\end{proof}

Every commutative Moufang loop of order $\leq3^5$ is centrally nilpotent of class at most two \cite[Lemma 1.6]{KN}. Both of the nonassociative commutative Moufang loops of order $3^4$ have cyclic centers, and so do two of the six nonassociative Moufang loops of order $3^5$ (see Table~\ref{t:aut}). Proposition \ref{Pr:affine_are_trimedial} therefore applies to these loops. However, Proposition \ref{Pr:affine_are_trimedial} does not tell the whole story, as there are commutative Moufang loops of order $3^5$ that have a non-cyclic center, yet any two of its $J$-central automorphisms commute.

\section*{Acknowledgement}

We thank the anonymous referees for useful comments, especially for pointing out that the $J$-central automorphisms of a loop $Q$ with the automorphic inverse property coincide with the coset $J\CA(Q)$, and that the group $\CA(Q)$ can be calculated as in Lemma \ref{Lm:Referee}, which is more elegant and more efficient compared to our original approach.

We also thank Prof. Anthony Evans for historical comments on orthomorphisms.

\end{document}